\newtheorem{theorem}{Theorem}[section]
\newtheorem{lemma}[theorem]{Lemma}
\newtheorem{corollary}[theorem]{Corollary}
\newtheorem{remark}[theorem]{Remark}
\newtheorem{example}[theorem]{Example}
\newtheorem{examples}[theorem]{Examples}
\newtheorem{definition}[theorem]{Definition}
\newtheorem{question}[theorem]{Question}
\newtheorem{convention}[theorem]{Convention}
\def\sqr#1#2{{\vcenter{\hrule height.#2pt
\hbox{\vrule width.#2pt height#1pt \kern#1pt \vrule width.#2pt}
\hrule height.#2pt}}}
\def\qed{\hspace*{\fill} $\square$}
\begin{document}


\title[Tensor products and homological conjectures for Ulrich modules]{Tensor products and solutions to two homological conjectures for Ulrich modules}

\author{Cleto B.~Miranda-Neto}
\address{Departamento de Matem\'atica, Universidade Federal da
Para\'iba, 58051-900, Jo\~ao Pessoa, PB, Brazil.}
\email{cleto@mat.ufpb.br}

\author{Thyago S.~Souza}
\address{Unidade Acad\^emica de Matem\'atica, Universidade Federal de
Campina Grande, 58429-970, Campina Grande, PB, Brazil.}
\email{thyago@mat.ufcg.edu.br}

\subjclass[2020]{Primary: 13C14, 13H10, 13C10; Secondary: 13C13, 13D07 } \keywords{Tensor product, Ulrich module, maximal Cohen-Macaulay module, free module, Auslander-Reiten conjecture.}



\begin{abstract}
We address the problem of when the tensor
product of two finitely generated modules over a Cohen-Macaulay local ring is Ulrich in the generalized sense of Goto et al., and in particular in the original sense from the 80's. As applications, besides freeness criteria for modules, characterizations of complete intersections, and an Ulrich-based approach to the long-standing Berger's conjecture, we show that two celebrated homological conjectures, namely the Auslander-Reiten and the Huneke-Wiegand problems, are true for the class of Ulrich modules. 
\end{abstract}

\maketitle

\section{Introduction}

The theory of Ulrich modules -- both the classical one initiated by B. Ulrich 
 \cite{ulrich84} (see also  \cite{BHU}, \cite{H-K}, \cite{ulrich91}) and its generalization proposed by Goto et al. \cite{goto2014} -- is nowadays recognized as being an active area of research in commutaive algebra as well as in algebraic geometry (since the inception of the notions of Ulrich sheaves and bundles). For some history and justifications for the interest in the theory, we refer to \cite{kobayashi2018}, \cite{MirandaQueirozSouza} and their suggested references on the subject.

Let $R$ be a Cohen-Macaulay local ring and let $M, N$ be finitely generated $R$-modules. The present note addresses the problem as to when the tensor product $M\otimes_RN$ is Ulrich in the generalized sense of Goto et al., and in particular in the classical context from the 80's, i.e., its minimal number of generators equals its multiplicity and, in addition, maximal Cohen-Macaulayness takes place. Significant efforts have been made to study tensor products with the latter property (see, e.g., \cite{HIW, HW, KYT2022, lyle.montano}), so the present paper goes one step further, and additionally provides a number of applications.

Conventions and preliminaries  are given in
Section \ref{conv+basics}, while Section \ref{prep} invokes some preparatory lemmas 
which are used throughout the paper. In Section \ref{aux} we establish our main general results on the Ulrichness of tensor products over Cohen-Macaulay local rings, namely, theorems \ref{teo.ulrich.tensor.products}, \ref{teo3.ulrich.tensor.products}, and \ref{teo2.ulrich.tensor.products}, where the first two theorems require the vanishing of some (finitely many) Ext modules, and the last one deals with the case where $M$ has finite Gorenstein dimension and $N=\omega_R$ (a canonical module of $R$, in case one exists). 

Finally, in Section \ref{appli} we describe applications  which we divide into four subsections. In \ref{appli1} we study multiplicity of tensor products and derive freeness criteria for finite modules. In \ref{appli2} we show that Urich modules can be used to characterize codimension 3 complete intersection ideals. In \ref{appli3} we develop an Ulrich-based approach to the long-standing Berger's conjecture on modules of differentials. We close the paper with \ref{appli4}, where we show that the celebrated Huneke-Wiegand and Auslander-Reiten conjectures (for the latter we assume that the given local ring is a Cohen-Macaulay domain) are true for the class of Ulrich modules; to this end, we prove a central result  (Corollary \ref{cor5.ulrich.tensor.products}) which in fact gives a characterization of regular local rings.

\section{Conventions, and basics on Ulrich ideals and modules}\label{conv+basics}

First we make some conventions that will be in force throughout the entire paper. By a {\it ring} we mean a commutative Noetherian ring with $1\neq 0$, and if $R$ is a ring then by a {\it finite} $R$-module we mean a finitely generated $R$-module. Whenever $R$ is a local ring, ${\mathfrak m}$ stands for its maximal ideal. Moreover, if $R$ possesses a canonical module (e.g., if $R$ is a complete Cohen-Macaulay local ring), then this module is unique up to isomorphism and is denoted $\omega_R$.

Let $R$ be a local ring, $M$ a finite $R$-module, and $I\neq R$ an ideal of definition of $M$, i.e., ${{\mathfrak m}}^nM\subset IM$ for some $n\geq 1$.  We denote by $\nu(M)$ and
$\textrm{e}_I^0 (M)$, respectively, the minimal number of generators of $M$ and the multiplicity of $M$ with respect to $I$. If $I
= {\mathfrak m}$, we  write $\textrm{e}(M)$ instead of $\textrm{e}_{{\mathfrak m}}^0(M)$. Recall that $M$ is said to have a (generic, constant) rank, which is typically denoted ${\rm rk}_RM$, if  $M\otimes_RQ$ is a free $Q$-module of rank ${\rm rk}_RM$, where $Q$ is the total fraction ring of $R$. For instance, any ideal of $R$ containing a regular element has rank 1 as an $R$-module, and if $R$ is a domain then ${\rm rk}_RM$ always exists. In case $M$ has a rank, we have the useful formula $\textrm{e}(M)={\rm rk}_RM\, {\rm e}(R)$. 

Next, recall that $M$ is a maximal Cohen-Macaulay $R$-module if ${\rm depth}_RM={\rm dim}\,R$, where by depth we always mean ${\mathfrak m}$-depth. Note the zero module is not maximal Cohen-Macaulay since, by a widely accepted convention, its depth is set to be $+\infty$. Now if the local ring $R$ is Cohen-Macaulay and $M$ is maximal Cohen-Macaulay, then there is an inequality $\nu(M) \leq \textrm{e}(M)$ (see \cite[Proposition 1.1]{BHU}).

\begin{definition}\label{def.ulrich.module} \rm Let $R$ be a local ring. A finite $R$-module $M$ is \emph{Ulrich} if $M$ is maximal Cohen-Macaulay and $\nu(M)=\textrm{e}(M)$.
\end{definition}

The study of the above property initiated in the 80's; see \cite{ulrich84} (the term {\it Ulrich module} was coined in \cite{H-K}). Several classes of examples of Ulrich modules can be found in the literature; for example, if $R$ is a 1-dimensional Cohen-Macaulay local ring with multiplicity ${e}$, then ${{\mathfrak m}}^{{e}-1}$ is Ulrich.


\begin{convention}\label{convention}\rm Throughout the entire paper, whenever $R$ is a Cohen-Macaulay local ring, we let  $I$ stand for an ${\mathfrak m}$-primary ideal containing a parameter ideal $Q$ as a reduction. It is well-known, for example, that any ${\mathfrak m}$-primary ideal of $R$ has this property if the residue class field $R/{\mathfrak m}$ is infinite, or if $R$ is 1-dimensional and analytically irreducible.
\end{convention}







\begin{definition}\label{def.urich.ideal}\rm (\cite{goto2014})
Let $R$ be a Cohen-Macaulay local ring. 
The ideal $I$
is \emph{Ulrich} if $I^2 =
{Q}I$ and $I/I^2$ is a free $R/I$-module.
\end{definition}

\begin{remark}\label{obs1.def.urich.ideal}\rm Recall that if $R$ is a Cohen-Macaulay local ring, then the ideal $I$ is \emph{Gorenstein} if the quotient ring $R/I$ is Gorenstein. Now, if $R$ is Gorenstein then according to \cite[Corollary 2.6]{goto2014} every Ulrich ideal is Gorenstein.
\end{remark}


\begin{definition}\label{def.I.urich.module}\rm (\cite{goto2014})
Let $R$ be a Cohen-Macaulay local ring. A finite
$R$-module $M$ is \emph{Ulrich
with respect to} $I$ if the following conditions hold:
\begin{itemize}
             \item[(i)] $M$ is a maximal Cohen-Macaulay $R$-module;
             \item[(ii)] $IM = {Q}M$;
             \item[(iii)] $M/IM$ is a free $R/I$-module.
\end{itemize}
\end{definition}

\begin{examples}\rm \label{exemplo} (i) (\cite[Proposition 3.9]{Ku}) Let $S=K[\![x, y, z]\!]$ be a formal power series ring over an infinite field $K$, and fix any regular sequence $\{f, g, h\}\subset \mathfrak{m}$. Consider the 1-dimensional Cohen-Macaulay local ring $R=S/(f^2-gh, g^2-hf, h^2-fg)$. Then, $(f, g, h)R$ is an Ulrich ideal. 
\medskip

(ii) Clearly, if ${\rm dim}\,R=1$ and the ideal $I$ is Ulrich, then $I$ is an Ulrich $R$-module with respect to $I$.

\medskip

(iii) There is an interesting recipe to obtain Ulrich modules from Ulrich ideals (in the setting of Convention \ref{convention}). Indeed, by \cite[Theorem 4.1]{goto2014}, if $I$ is Ulrich but not a parameter ideal then, for any $i\geq {\rm dim}\,R$, the $i$-th syzygy module of $R/I$ is an Ulrich $R$-module with respect to $I$. 

\end{examples}

\begin{remark}\label{obs.I.urich.module}\rm Let us point out that, according to the discussion in \cite[paragraph after Definition 1.2]{goto2014},
condition (ii) of Definition \ref{def.I.urich.module} is equivalent to saying that $\textrm{e}_{I}^0 (M) =
\ell_R(M/IM)$. In particular, if $I = {\mathfrak m}$ then (ii) is the same as  $\textrm{e}(M) = \nu(M)$. This shows that $M$ is an Ulrich module with respect to ${\mathfrak m}$ if and only if $M$ is an Ulrich module in the sense of Definition
\ref{def.ulrich.module}.

\end{remark}


\section{Preparatory results}\label{prep}

Throughout this auxiliary section, 
$R$ stands for a Cohen-Macaulay local ring of dimension $d\geq 1$, possessing a canonical module $\omega_R$ (the existence of $\omega_R$ is not required in Lemma \ref{main-lemma}), and $M, N$ denote finite $R$-modules. 

Following standard notations, we write $M^*={\rm Hom}_R(M, R)$ and $M^{\dagger}={\rm Hom}_R(M, \omega_R)$. In particular, $M^*\cong M^{\dagger}$ if $R$ is Gorenstein.
The ({\it Auslander}) \emph{transpose} $\textrm{Tr}\,M$ of 
$M$ is defined as the cokernel of the map $\partial_1^*
= \textrm{Hom}_{R} (\partial_1, R)$, i.e., the dual of the first differential map
$\partial_1$ in a minimal free resolution of $M$ over $R$. Hence, a minimal free presentation $F_1 \stackrel{\partial_1}{\rightarrow} F_0 \rightarrow
M \rightarrow 0$ induces
an exact sequence $$0 \longrightarrow M^{*} \longrightarrow
F_0^{*} \stackrel{\partial_1^{*}}{\longrightarrow} F_1^{*} \longrightarrow
{\rm Tr}\,M \longrightarrow 0.$$ Note ${\rm Tr}\,M$ is uniquely determined up to isomorphism.
Next we collect the auxiliary facts that will be used in the proof of our main results.

\begin{lemma}\label{L-M}{\rm (\cite[Lemma 3.4(1)]{lyle.montano})} Suppose $N$ is maximal Cohen-Macaulay. If ${\rm Ext}_R^i(M,N) = 0$ for all $i=1, \ldots d$, then $M\otimes_RN^{\dagger}$ is  maximal Cohen-Macaulay.
\end{lemma}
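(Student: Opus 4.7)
My plan is to translate the conclusion via local duality and then extract it from the Ext-vanishing hypothesis through a hyper-Ext spectral sequence. On a Cohen-Macaulay local ring with canonical module, $M\otimes_R N^\dagger$ is maximal Cohen-Macaulay if and only if ${\rm Ext}_R^i(M\otimes_R N^\dagger,\omega_R)=0$ for every $i\ge 1$. Since $N$ is maximal Cohen-Macaulay, one has $N\cong N^{\dagger\dagger}$ and ${\rm Ext}_R^i(N^\dagger,\omega_R)=0$ for $i\ge 1$, so derived tensor-Hom adjunction collapses to
$$\mathbf{R}{\rm Hom}_R(M,N)\;\cong\;\mathbf{R}{\rm Hom}_R\bigl(M\otimes_R^{\mathbf{L}} N^\dagger,\omega_R\bigr),$$
giving rise to a convergent hyper-Ext spectral sequence
$$E_2^{p,q}\;=\;{\rm Ext}_R^p\bigl({\rm Tor}_q^R(M,N^\dagger),\omega_R\bigr)\;\Longrightarrow\;{\rm Ext}_R^{p+q}(M,N).$$

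The edge row at $q=0$ is precisely the object I need to kill, and the abutment vanishes on the diagonals $p+q=1,\ldots,d$ by hypothesis (while $E_2^{p,0}=0$ for $p>d$ is automatic since ${\rm Ext}_R^{>d}(-,\omega_R)=0$ on finite modules by local duality). The cleanest route is first to establish the auxiliary Tor-vanishing ${\rm Tor}_q^R(M,N^\dagger)=0$ for $q\ge 1$; that would collapse the spectral sequence to the row $q=0$ and immediately identify ${\rm Ext}_R^p(M\otimes_R N^\dagger,\omega_R)\cong {\rm Ext}_R^p(M,N)$ for every $p$. I would attempt the Tor-vanishing via a corner argument: if $q^*\ge 1$ were maximal with ${\rm Tor}_{q^*}^R(M,N^\dagger)\ne 0$, then at $(p^*,q^*)$ with $p^*=d-\dim_R {\rm Tor}_{q^*}^R(M,N^\dagger)$ the entry $E_2^{p^*,q^*}$ is nonzero, receives no differentials at any page (by maximality of $q^*$), and — once the outgoing differentials are controlled — persists to $E_\infty$, contradicting the vanishing of the abutment as long as $p^*+q^*\le d$.

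The hard part will be the bookkeeping in this corner argument: arranging $p^*+q^*\le d$ forces $\dim_R{\rm Tor}_{q^*}^R(M,N^\dagger)\ge q^*$, which is not automatic, and handling the outgoing differentials $d_r\colon E_r^{p^*,q^*}\to E_r^{p^*+r,q^*-r+1}$ for $r$ small requires additional input from local duality. Should this approach prove recalcitrant, I would fall back on induction on $d$: by prime avoidance pick $x\in \fm$ regular on $R$, $N$, and $N^\dagger$ (all of depth $d\ge 1$), pass to $\bar R=R/xR$, and use the identifications $(\bar N)^{\dagger_{\bar R}}\cong N^\dagger/xN^\dagger$ (obtained by applying ${\rm Hom}_R(-,\omega_R)$ to $0\to N\xrightarrow{x}N\to\bar N\to 0$ together with ${\rm Ext}_R^1(\bar N,\omega_R)\cong {\rm Hom}_{\bar R}(\bar N,\omega_{\bar R})$) and $\overline{M\otimes_R N^\dagger}\cong \bar M\otimes_{\bar R}(\bar N)^{\dagger_{\bar R}}$, reducing to dimension $d-1$; this alternative has its own delicate step in verifying $x$-regularity on $M\otimes_R N^\dagger$ so that depth descends by exactly one, which ultimately circles back to a Tor-vanishing input.
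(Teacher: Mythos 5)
The paper does not supply its own proof of this lemma; it cites \cite[Lemma 3.4(1)]{lyle.montano} directly, so there is no in-house argument to compare against. Nevertheless, your proposal has a genuine gap. The first route hinges entirely on proving ${\rm Tor}_q^R(M,N^\dagger)=0$ for all $q\ge 1$ so as to collapse the hyper-Ext spectral sequence onto its bottom row, and that auxiliary vanishing is simply \emph{false} under the stated hypotheses. Take $R=k[\![x,y]\!]/(xy)$, a one-dimensional Gorenstein local ring (so $\omega_R\cong R$), and $M=N=R/xR$. Then $N$ is maximal Cohen--Macaulay and $N^\dagger={\rm Hom}_R(R/xR,R)=(0:_Rx)=yR\cong R/xR$. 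Using the $2$-periodic minimal resolution $\cdots\to R\xrightarrow{\,y\,}R\xrightarrow{\,x\,}R\to R/xR\to 0$ one checks that ${\rm Ext}^1_R(M,N)=0$ (so the hypothesis holds with $d=1$, and indeed $M\otimes_RN^\dagger\cong R/xR$ is maximal Cohen--Macaulay, confirming the lemma), yet
\[
{\rm Tor}_1^R(M,N^\dagger)\cong {\rm Tor}_1^R(R/xR,R/xR)\cong k\neq 0.
\]
So no corner argument can rescue that step: you would be proving a false statement. The difficulties you flag (the dimension bookkeeping, the outgoing differentials) are not the real issue; the target itself is wrong.

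Even setting that aside, the spectral-sequence setup has a structural mismatch with what you need. The conclusion requires $E_2^{p,0}=0$ for $1\le p\le d$, whereas the vanishing of the abutment only gives $E_\infty^{p,0}=0$, and $E_\infty^{p,0}$ is a \emph{quotient} of $E_2^{p,0}$ by the images of incoming differentials from the rows $q\ge 1$ --- which, as the example above shows, need not vanish. Deducing $E_2^{p,0}=0$ from the collapse at infinity would therefore require additional control on those incoming differentials, i.e.\ on ${\rm Ext}^{p'}({\rm Tor}_{q'}(M,N^\dagger),\omega_R)$ for $q'\ge 1$ (for instance via dimension bounds on the ${\rm Tor}$ modules, which can be obtained on the $q=1$ row by localizing at minimal primes and invoking Matlis duality over Artinian local rings, but become increasingly delicate as $p$ grows). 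That is a substantially different argument from the one you outline. Your fallback induction also stalls exactly where you flag it: choosing $x\in\fm$ regular on $M\otimes_RN^\dagger$ presupposes ${\rm depth}(M\otimes_RN^\dagger)\ge 1$, which is part of the conclusion.
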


\begin{lemma}\label{Taka}{\rm (\cite[Theorem 2.2]{KYT2022})} Suppose $N$ is maximal Cohen-Macaulay. Then, $M\otimes_RN $ is maximal Cohen-Macaulay
if and only if ${\rm Tr}\,M \otimes_RN^{\dagger}$ is maximal Cohen-Macaulay. \end{lemma}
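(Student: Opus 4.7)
I would compare depths of $M \otimes_R N$ and $\operatorname{Tr} M \otimes_R N^{\dagger}$ via two parallel instances of the classical Auslander--Bridger four-term exact sequence
\begin{equation*}
0 \to \operatorname{Ext}^1_R(\operatorname{Tr} X, Y) \to X \otimes_R Y \to \operatorname{Hom}_R(X^{*}, Y) \to \operatorname{Ext}^2_R(\operatorname{Tr} X, Y) \to 0, \qquad (\ddagger)
\end{equation*}
which is obtained, for any finite $R$-modules $X$ and $Y$, by splicing a minimal free presentation of $X$ with its $R$-dual.

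The first step is to apply $(\ddagger)$ to both $(X, Y) = (M, N)$ and $(X, Y) = (\operatorname{Tr} M, N^{\dagger})$. Because $\operatorname{Tr}\operatorname{Tr} M \cong M$ modulo free summands, and because $N$ being MCM gives us $N \cong N^{\dagger\dagger}$ together with MCM-ness of $N^{\dagger}$, the Ext-terms of the second instance become $\operatorname{Ext}^{1,2}_R(M, N^{\dagger})$. Tensor--Hom adjunction then identifies the Hom-terms of both sequences as canonical duals,
$$\operatorname{Hom}_R(M^{*}, N) \cong (M^{*} \otimes_R N^{\dagger})^{\dagger}, \qquad \operatorname{Hom}_R((\operatorname{Tr} M)^{*}, N^{\dagger}) \cong ((\operatorname{Tr} M)^{*} \otimes_R N)^{\dagger},$$
and a routine syzygy argument shows that the canonical dual of any finite module has depth at least $\min(d, 2)$, so these third terms have automatically controlled depth.

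The crucial step is to match the Ext-terms of the two four-term sequences. Using $R\operatorname{Hom}_R(N, \omega_R) \simeq N^{\dagger}$ (valid since $N$ is MCM) together with derived Hom--tensor adjunction, one obtains
$$\operatorname{Ext}^i_R(\operatorname{Tr} M, N) \cong H^i\bigl(R\operatorname{Hom}_R(\operatorname{Tr} M \otimes^{\mathbf{L}}_R N^{\dagger}, \omega_R)\bigr),$$
and by Grothendieck local duality these cohomologies translate MCM-ness of $\operatorname{Tr} M \otimes_R N^{\dagger}$ into Ext-vanishings parallel to those entering $(\ddagger)$ for $(M, N)$; running the symmetric argument with $M$ replaced by $\operatorname{Tr} M$ closes the equivalence. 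The main obstacle will be the bookkeeping in the hyperext spectral sequence --- in particular, ensuring that the Tor-contributions and the free summands coming from $\operatorname{Tr}\operatorname{Tr} M \cong M$ do not introduce spurious depth terms --- but once the derived-category setup is aligned, the depth lemma applied to both instances of $(\ddagger)$ delivers the biimplication.
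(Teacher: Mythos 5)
The paper does not prove this lemma internally; it cites \cite[Theorem 2.2]{KYT2022}, but the mechanism of the standard proof is clearly visible inside the paper's own proof of Theorem \ref{teo.ulrich.tensor.products}: take a minimal free presentation $R^n\to R^m\to M\to 0$, tensor with $N$ and with $N^\dagger$ to produce the exact four-term sequences (\ref{seq1}) and (\ref{seq2}), then run the depth lemma and $\omega$-duality on those two sequences to pass back and forth between $M\otimes_RN$ and $\operatorname{Tr}M\otimes_RN^\dagger$.  That argument is elementary: two exact sequences, the depth lemma, and the fact that $(-)^\dagger$ preserves MCM and is an involution on MCM modules.

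Your proposal takes a genuinely different route (Auslander--Bridger's $(\ddagger)$ applied twice, $\operatorname{Tr}\operatorname{Tr}M\simeq M$ up to frees, derived adjunction and local duality), but it has a real gap at exactly the step you flag as an ``obstacle,'' and the obstacle is not merely bookkeeping.  The derived-adjunction isomorphism
$$\operatorname{Ext}^i_R(\operatorname{Tr}M,N)\;\cong\;H^i\bigl(R\operatorname{Hom}_R(\operatorname{Tr}M\otimes^{\mathbf L}_RN^\dagger,\omega_R)\bigr)$$
is correct, but the right-hand side is a \emph{hyper}-Ext.  Maximal Cohen--Macaulayness of $\operatorname{Tr}M\otimes_RN^\dagger$ is equivalent to the vanishing of $\operatorname{Ext}^{>0}_R(\operatorname{Tr}M\otimes_RN^\dagger,\omega_R)$, the \emph{ordinary} Ext, and these two coincide only when $\operatorname{Tor}^R_{>0}(\operatorname{Tr}M,N^\dagger)=0$.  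Nothing in the hypotheses gives that Tor-vanishing, and your spectral-sequence ``bookkeeping'' is precisely the place where those uncontrolled $\operatorname{Tor}$-terms enter and prevent the equivalence from being read off.  Separately, your observation that the Hom-terms in $(\ddagger)$ are $\omega$-duals and hence have depth $\geq\min(d,2)$ is true but too weak to be useful: for $d>2$ this falls strictly short of MCM, so the depth lemma applied to $(\ddagger)$ cannot force the $\operatorname{Ext}^{1,2}$-terms to vanish, nor detect MCM-ness of $M\otimes_RN$ from the sequence.  In short, the individual facts you invoke (stable equivalence $\operatorname{Tr}\operatorname{Tr}M\simeq M$, reflexivity of MCM modules, $R\operatorname{Hom}_R(N,\omega_R)\simeq N^\dagger$, the adjunction) are all correct, but they do not assemble into a proof; the gap is the missing control on $\operatorname{Tor}^R_{>0}(\operatorname{Tr}M,N^\dagger)$ and, symmetrically, on $\operatorname{Tor}^R_{>0}(M,N)$.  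The direct presentation-based argument avoids Tor altogether, which is why it succeeds.
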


\begin{lemma}\label{main-lemma}{\rm (\cite[Theorem 3.2]{MirandaQueirozSouza})}
Suppose $M, N$
are maximal Cohen-Macaulay $R$-modules such that
$\textrm{\emph{Hom}}_{R} (M, N) \neq 0$ and
$\textrm{\emph{Ext}}_{R}^i(M, N)=0$ for all $i=1, \ldots, n$, where either $n=d-1$ {\rm (}in this case, the condition is vacuous if $d=1${\rm )} or $n=d$. Let $I$ and $Q$ be as in Convention \ref{convention}. Assume that $M$ {\rm (}resp.\,$N${\rm )} is an Ulrich $R$-module with respect
to $I$, and consider the following conditions:
\begin{itemize}
             \item[(i)] $\textrm{\emph{Hom}}_{R} (M, N)$ is an Ulrich
$R$-module with respect to $I$;
             \item[(ii)] $\textrm{\emph{Hom}}_{R} (M, N) / I \textrm{\emph{Hom}}_{R} (M, N)$ is a free
             $R/I$-module;
             \item[(iii)]  $\textrm{\emph{Hom}}_{R/Q} (R/I, N/QN)$ {\rm (}resp. $\textrm{\emph{Hom}}_{R/Q} (M/QM, R/I)${\rm )} is a free
             $R/I$-module.
\end{itemize}
Then the following statements hold:
\begin{enumerate}
             \item[(a)] If $d=1$ or $n = d-1\geq 1$ then {\rm (i)} $\Leftrightarrow$ {\rm
             (ii)};
             \item[(b)] If $n = d\geq 1$ then {\rm (i)} $\Leftrightarrow$ {\rm
             (ii)} $\Leftrightarrow$ {\rm
             (iii)}.

\end{enumerate}
\end{lemma}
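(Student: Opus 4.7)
The approach is to analyze $H := \Hom_R(M, N)$ by reduction modulo the parameter ideal $Q$, which forms a regular sequence on both $M$ and $N$ since they are maximal Cohen-Macaulay. First I would verify that $H$ is itself maximal Cohen-Macaulay. When $d = 1$ this is immediate from the embedding $H \hookrightarrow \Hom_R(F_0, N) \cong N^{\nu(M)}$ coming from a minimal presentation $F_1 \to F_0 \to M \to 0$. For $d \geq 2$ I would iteratively mod out by $x_1, \ldots, x_{d-1}$, using at each step the isomorphism $\Hom_R(M', N')/x\Hom_R(M', N') \cong \Hom_{R'/x}(M'/xM', N'/xN')$ (which consumes one level of Ext-vanishing and preserves regularity of the next parameter on the quotient Hom); after $d-1$ such steps one lands in a one-dimensional residual ring where the resulting Hom has depth $\geq 1$, and assembling the regular sequence back gives $\mathrm{depth}_R H \geq d$.

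The decisive step for part (b) is the full base-change isomorphism
\[
H/QH \;\xrightarrow{\;\sim\;}\; \Hom_{R/Q}(M/QM, N/QN),
\]
which requires iterating all the way through $Q = (x_1, \ldots, x_d)$ and consumes exactly $d$ levels of Ext-vanishing, matching the hypothesis $n = d$. With this isomorphism in hand and $M$ Ulrich with respect to $I$, the relation $IM = QM$ yields $M/QM = M/IM \cong (R/I)^{\nu(M)}$, hence
\[
H/QH \;\cong\; \Hom_{R/Q}(R/I, N/QN)^{\nu(M)}.
\]
This module is annihilated by $I$, since any $R/Q$-linear map out of $R/I$ is killed by $I$; therefore $IH \subseteq QH$ and so $IH = QH$. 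Consequently $H/IH = H/QH$, which gives (ii) $\Leftrightarrow$ (iii), and combined with the already-verified maximal Cohen-Macaulayness this also yields (i) $\Leftrightarrow$ (ii), completing part (b). The symmetric case where $N$ is Ulrich is handled by using $N/QN \cong (R/I)^{\nu(N)}$ in place of $M/QM$.

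For part (a) the full iteration is one step short, so I would descend only by $x_1, \ldots, x_{d-1}$, obtaining $\overline{H} := H/(x_1, \ldots, x_{d-1})H \cong \Hom_{\overline{R}}(\overline{M}, \overline{N})$ over the one-dimensional Cohen-Macaulay local ring $\overline{R}$, with the Ulrich property and conditions (i), (ii) transported faithfully. In the one-dimensional residual setting no Ext-vanishing is needed, because the long exact sequence from $0 \to \overline{N} \xrightarrow{\overline{x}} \overline{N} \to \overline{N}/\overline{x}\,\overline{N} \to 0$ always yields an injection
\[
\overline{H}/\overline{x}\,\overline{H} \;\hookrightarrow\; \Hom_{\overline{R}/\overline{x}}\bigl(\overline{M}/\overline{x}\,\overline{M},\, \overline{N}/\overline{x}\,\overline{N}\bigr),
\]
whose target is annihilated by $\overline{I}$ under the Ulrich hypothesis on $\overline{M}$ (or $\overline{N}$), by the same reasoning as in part (b). Thus $\overline{I}\,\overline{H} \subseteq \overline{x}\,\overline{H}$, which lifts back to $IH \subseteq QH$; combining with hypothesis (ii) and the maximal Cohen-Macaulayness of $H$ then yields (i).

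The principal obstacle is careful bookkeeping of the Ext-vanishing count through the iterated base change: verifying that exactly $n = d$ levels are consumed in part (b), and that the one-dimensional residual argument in part (a) genuinely requires no Ext hypothesis beyond what is inherited from the descent. A subsidiary delicate point is ensuring that hypothesis (ii), together with the Ulrich structure on $M$ or $N$, cleanly forces the third Ulrich ingredient $IH = QH$ without requiring the full base-change isomorphism of (b); this is what makes the one-dimensional endgame of (a) the technical core.
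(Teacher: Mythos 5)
The paper does not reproduce a proof of this lemma; it is quoted verbatim from \cite[Theorem 3.2]{MirandaQueirozSouza}, so there is no internal argument to compare your attempt against. Evaluating your proposal on its own merits: the strategy of cutting $H = \Hom_R(M,N)$ down along the parameter sequence generating $Q$ is the natural one, and your bookkeeping of the Ext-vanishing hypotheses is correct. Specifically, at each reduction step the isomorphism $\Hom_R(M',N')/x\Hom_R(M',N') \cong \Hom_{R'/(x)}(M'/xM', N'/xN')$ requires $\Ext^1_{R'}(M',N')=0$, and the descent $\Ext^i_{R'}(M',N')=0$ for $i=1,\dots,k$ implies $\Ext^i_{R'/(x)}(M'/xM', N'/xN')=0$ for $i=1,\dots,k-1$ (via $\Ext^i_{R'/(x)}(M'/xM',N'/xN') \cong \Ext^i_{R'}(M',N'/xN')$ and the long exact sequence), so $n=d$ permits a full $d$-step iteration down to $R/Q$, while $n=d-1$ gives $d-1$ steps down to a one-dimensional quotient. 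Your observation that the final one-dimensional step needs only an injection (always available since the parameter stays regular on the Hom, a submodule of $N$-copies) is exactly what lets you prove $IH \subseteq QH$, hence $IH=QH$, without the extra Ext-vanishing that (b) would demand; this is the right explanation of why (a) needs one fewer vanishing. The deduction of (ii)$\Leftrightarrow$(iii) in part (b) from $H/QH \cong \Hom_{R/Q}(R/I, N/QN)^{\oplus \nu(M)}$ is also correct, using that a finite direct power of a finitely generated module over a local ring is free if and only if the module itself is. The depth computation $\depth_R H \ge d$ is fine as well: the $x_i$ are $H$-regular because $H$ embeds in a finite direct sum of copies of $N$, and the one-dimensional quotient $\Hom_{\overline R}(\overline M, \overline N)$ is nonzero with depth at least one. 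I see no gap; this is a correct proof of the lemma along the expected lines.
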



\begin{lemma}\label{ulr}{\rm (\cite[Corollary 3.5]{MirandaQueirozSouza})}
Assume that $M$ is a maximal Cohen-Macaulay $R$-module, and that the ideal $I$ is Gorenstein. The following
assertions are equivalent:
\begin{itemize}
             \item[(i)] $M$ is an Ulrich
$R$-module with respect to $I$;
             \item[(ii)] $M^{\dagger}$ is an Ulrich
$R$-module with respect to $I$.
\end{itemize}
\end{lemma}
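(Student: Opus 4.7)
My plan is to derive both implications from a single application of Lemma \ref{main-lemma}(b) with $N = \omega_R$. I first verify its hypotheses: $\omega_R$ is maximal Cohen-Macaulay; since $M$ is MCM and nonzero, $M^{\dagger} = {\rm Hom}_R(M, \omega_R)$ is MCM and nonzero (by biduality on the MCM subcategory in the presence of a canonical module); and ${\rm Ext}_R^i(M, \omega_R) = 0$ for all $i \geq 1$, by the standard vanishing of $\omega_R$ against MCM modules. For the implication (i) $\Rightarrow$ (ii), I take $M$ Ulrich with respect to $I$ as the Ulrich hypothesis in Lemma \ref{main-lemma} and apply part (b) with $n = d$: conditions (i)--(iii) of that lemma are then equivalent, so it suffices to verify (iii), namely that ${\rm Hom}_{R/Q}(R/I, \omega_R/Q\omega_R)$ is a free $R/I$-module.

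The main computation is that this freeness follows automatically from the Gorensteinness of $I$. Since $Q$ is a parameter ideal and $\omega_R$ is MCM, a regular system of parameters generating $Q$ is regular on $\omega_R$, whence $\omega_R/Q\omega_R \cong \omega_{R/Q}$, the canonical module of the artinian Cohen-Macaulay ring $R/Q$. Applying the standard formula for the canonical module of a quotient (equivalently, local duality in dimension zero),
$$
{\rm Hom}_{R/Q}(R/I,\, \omega_{R/Q}) \cong \omega_{R/I}.
$$
The Gorenstein hypothesis on $I$ forces $\omega_{R/I} \cong R/I$, which is free of rank one over $R/I$; hence condition (iii) holds, and $M^{\dagger}$ is Ulrich with respect to $I$.

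For the converse (ii) $\Rightarrow$ (i), I rerun the same argument with $M$ replaced by $M^{\dagger}$. Since $M^{\dagger}$ is MCM and nonzero, all the hypotheses transfer, and Lemma \ref{main-lemma}(b) now yields that $M^{\dagger\dagger}$ is Ulrich with respect to $I$. Invoking MCM biduality $M^{\dagger\dagger} \cong M$ (valid since $R$ admits a canonical module and $M$ is MCM) completes the proof. The main obstacle I anticipate is the canonical-module identification $\omega_{R/I} \cong {\rm Hom}_{R/Q}(R/I, \omega_{R/Q})$, which is precisely where the Gorensteinness of $I$ enters; this is standard material (cf.\ Bruns-Herzog) but deserves an explicit citation to make clear that it is the only nontrivial step not handled directly by Lemma \ref{main-lemma}.
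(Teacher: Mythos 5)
The paper does not prove this lemma; it is quoted directly from \cite[Corollary 3.5]{MirandaQueirozSouza}, whose proof is not reproduced here. That said, your reconstruction is correct and is the natural route: taking $N=\omega_R$ in Lemma~\ref{main-lemma}(b), the Ext-vanishing and nonvanishing of $\Hom$ are automatic for MCM $M$, and condition~(iii) of that lemma reduces, via the chain $\omega_R/Q\omega_R \cong \omega_{R/Q}$, $\Hom_{R/Q}(R/I,\omega_{R/Q}) \cong \omega_{R/I}$, and $\omega_{R/I} \cong R/I$ (this last by Gorensteinness of $I$), to the freeness of $R/I$ over itself. Running the same argument with $M^{\dagger}$ in place of $M$ and invoking MCM biduality $M^{\dagger\dagger}\cong M$ gives the converse. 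One small imprecision in your closing remark: the identification $\Hom_{R/Q}(R/I,\omega_{R/Q}) \cong \omega_{R/I}$ is a general duality fact requiring no hypothesis on $I$; Gorensteinness of $I$ enters only at the final step $\omega_{R/I}\cong R/I$, which you in fact state correctly in the body of the argument.
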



\section{Ulrich tensor products}\label{aux}
As before, we let $(R, \mathfrak{m})$ stand for a Cohen-Macaulay local ring of dimension $d\geq 1$ and possessing a canonical module, and the ideals $Q\subset I$ be as in Convention \ref{convention}. 


\subsection{First result and corollaries}  Our first theorem is as follows.

\begin{theorem}\label{teo.ulrich.tensor.products}
Let $M$ be a maximal Cohen-Macaulay $R$-module
and $N$ an Ulrich $R$-module with respect to $I$. Suppose $I$ is Gorenstein and $$\textrm{\emph{Ext}}_{R}^i(M,
N^{\dag})=0 \quad \mbox{for \,all} \quad i=1, \ldots, d.$$ The following
conditions are equivalent:
\begin{itemize}
\item[(i)] $M \otimes_R N$ is an Ulrich
$R$-module with respect to $I$;
             \item[(ii)] $\textrm{\emph{Hom}}_{R} (M, N^{\dag})$ is an Ulrich
$R$-module with respect to $I$;
             \item[(iii)] $\textrm{\emph{Hom}}_{R} (M, N^{\dag}) / I \textrm{\emph{Hom}}_{R} (M, N^{\dag})$ is
             $R/I$-free;
             \item[(iv)] $\textrm{\emph{Hom}}_{R/Q} (M/QM, R/I)$ is
             $R/I$-free.
\end{itemize}
\end{theorem}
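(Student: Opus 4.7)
The plan is to reduce the equivalence (i) $\Leftrightarrow$ (ii) to a duality argument via Lemma \ref{ulr}, and then obtain (ii) $\Leftrightarrow$ (iii) $\Leftrightarrow$ (iv) by invoking Lemma \ref{main-lemma} with its second module taken to be $N^{\dag}$.

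First I would use that $I$ is Gorenstein together with Lemma \ref{ulr} to deduce that $N^{\dag}$ is Ulrich with respect to $I$; in particular $N^{\dag}$ is maximal Cohen-Macaulay and, by the standard $\omega_R$-duality on MCM modules over a Cohen-Macaulay local ring with canonical module, $N^{\dag\dag}\cong N$. Applying Lemma \ref{L-M} with its ``$N$'' replaced by our $N^{\dag}$, the hypothesis $\textrm{Ext}_R^i(M,N^{\dag})=0$ for $i=1,\ldots,d$ yields that $M\otimes_R N^{\dag\dag}\cong M\otimes_R N$ is maximal Cohen-Macaulay. Next, the Hom-tensor adjunction produces a canonical isomorphism $\textrm{Hom}_R(M,N^{\dag})=\textrm{Hom}_R\bigl(M,\textrm{Hom}_R(N,\omega_R)\bigr)\cong \textrm{Hom}_R(M\otimes_R N,\omega_R)=(M\otimes_R N)^{\dag}$, so Lemma \ref{ulr} applied to the maximal Cohen-Macaulay module $M\otimes_R N$ (using once more that $I$ is Gorenstein) establishes (i) $\Leftrightarrow$ (ii).

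For the remaining equivalences I would verify the hypotheses of Lemma \ref{main-lemma} with $n=d$, its first module being $M$ and its second module being $N^{\dag}$: both are maximal Cohen-Macaulay, $\textrm{Ext}_R^i(M,N^{\dag})=0$ for $i=1,\ldots,d$ by assumption, $N^{\dag}$ is Ulrich with respect to $I$ by the previous step, and $\textrm{Hom}_R(M,N^{\dag})\cong(M\otimes_RN)^{\dag}\neq 0$ since $M\otimes_RN\neq 0$ by Nakayama. Part (b) of Lemma \ref{main-lemma} then gives (ii) $\Leftrightarrow$ (iii) $\Leftrightarrow$ (iv), where condition (iv) of the theorem corresponds precisely to the ``$\textrm{resp.}$'' alternative of Lemma \ref{main-lemma}(iii), because it is $N^{\dag}$ (not $M$) that carries the Ulrich hypothesis. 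The proof is essentially a careful orchestration of the lemmas collected in Section \ref{prep}; no step presents a serious obstacle, and the only real insight required is to recognize that the adjunction $\textrm{Hom}_R(M,N^{\dag})\cong(M\otimes_RN)^{\dag}$ converts tensor-product Ulrichness into Hom-module Ulrichness to which Lemma \ref{main-lemma} can be brought directly to bear.
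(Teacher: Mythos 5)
Your proof is correct, and it departs from the paper's argument at the central step. Both proofs begin identically: Lemma \ref{ulr} gives that $N^{\dag}$ is Ulrich with respect to $I$, and Lemma \ref{L-M} together with $N^{\dag\dag}\cong N$ gives that $M\otimes_RN$ is maximal Cohen-Macaulay; both also conclude identically by feeding the pair $(M,N^{\dag})$ into Lemma \ref{main-lemma}(b), correctly choosing the ``resp.'' alternative of its condition (iii) since it is the second Hom-argument $N^{\dag}$ that is Ulrich. Where you genuinely differ is in how you link (i) with (ii). The paper invokes Lemma \ref{Taka} (the Kimura--Otake--Takahashi theorem) to get $\mathrm{Tr}\,M\otimes_RN^{\dag}$ maximal Cohen-Macaulay, builds the three exact sequences (\ref{seq1})--(\ref{seq3}) from a minimal presentation of $M$, splices them to obtain $M\otimes_RN\cong \mathrm{Hom}_R(M,N^{\dag})^{\dag}$, and separately uses sequence (\ref{seq2}) to prove $\mathrm{Hom}_R(M,N^{\dag})$ is maximal Cohen-Macaulay so that $\mathrm{Hom}_R(M,N^{\dag})^{\dag\dag}\cong\mathrm{Hom}_R(M,N^{\dag})$. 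You replace all of this with the single observation that tensor-Hom adjunction gives $(M\otimes_RN)^{\dag}\cong\mathrm{Hom}_R(M,N^{\dag})$ unconditionally, so Lemma \ref{ulr} applied to the MCM module $M\otimes_RN$ yields (i) $\Leftrightarrow$ (ii) directly, and there is no need to verify maximal Cohen-Macaulayness of $\mathrm{Hom}_R(M,N^{\dag})$ beforehand (it falls out when (ii) holds, and is irrelevant otherwise). Your route is shorter, avoids an external ingredient (Lemma \ref{Taka}), and produces the same isomorphism $M\otimes_RN\cong\mathrm{Hom}_R(M,N^{\dag})^{\dag}$ as a one-line consequence of reflexivity of the MCM module $M\otimes_RN$; it is, in my view, the more transparent argument.
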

\begin{proof} Since $I$ is Gorenstein and $N$ is Ulrich with respect to
$I$, it follows from Lemma \ref{ulr}
that $N^{\dag}$ is Ulrich with respect to $I$. In particular,
$N^{\dag}$ is maximal Cohen-Macaulay. As $\textrm{Ext}_{R}^i(M,
N^{\dag})=0$ for all $i=1, \ldots, d$, Lemma \ref{L-M} implies that the module $M \otimes_R (N^{\dag})^{\dag}
\cong M \otimes_R N$ is maximal Cohen-Macaulay as well. Thus, by Lemma \ref{Taka}, $\textrm{Tr} M \otimes_R N^{\dag}$ has the same property. Now, pick a minimal free presentation $R^{n} \rightarrow
R^{m} \rightarrow M \rightarrow 0$. It is easy to see that it induces exact sequences
\begin{equation}\label{seq1} 0 \longrightarrow (\textrm{Tr} M \otimes_R N^{\dag})^{\dag}
\longrightarrow N^{\oplus n} \longrightarrow
N^{\oplus m} \longrightarrow M \otimes_R N \longrightarrow 0,\end{equation}
\begin{equation}\label{seq2}0
\longrightarrow \textrm{Hom}_{R} (M, N^{\dag}) \longrightarrow
(N^{\dag})^{\oplus m} \longrightarrow (N^{\dag})^{\oplus n}
\longrightarrow \textrm{Tr} M \otimes_R N^{\dag} \longrightarrow 0.\end{equation} Note that, since $\textrm{Tr} M \otimes_R N^{\dag}$ is maximal
Cohen-Macaulay, the sequence (\ref{seq2}) yields, in turn, the exact sequence
\begin{equation}\label{seq3}0 \longrightarrow (\textrm{Tr} M \otimes_R N^{\dag})^{\dag}
\longrightarrow N^{\oplus n} \longrightarrow
N^{\oplus m} \longrightarrow \textrm{Hom}_{R} (M,
N^{\dag})^{\dag} \longrightarrow 0.\end{equation} In particular, from (\ref{seq1}) and (\ref{seq3}), we obtain an isomorphism  
 \begin{equation}\label{iso}M \otimes_R N
\cong \textrm{Hom}_{R} (M, N^{\dag})^{\dag}.\end{equation} It follows from Lemma \ref{ulr} that $M
\otimes_R N$ is Ulrich with respect to $I$ if and only if the module
$\textrm{Hom}_{R} (M, N^{\dag})^{\dag\dag}$ is Ulrich with
respect to $I$. Since $\textrm{Tr} M \otimes_R N^{\dag}$ and $N$
are maximal Cohen-Macaulay, the sequence (\ref{seq2}) forces $\textrm{Hom}_{R} (M, N^{\dag})$ to be maximal
Cohen-Macaulay as well. Consequently, $\textrm{Hom}_{R} (M, N^{\dag}) \cong \textrm{Hom}_{R} (M,
N^{\dag})^{\dag\dag}$ and therefore $M \otimes_R N$ is Ulrich
with respect to $I$ if and only if $\textrm{Hom}_{R} (M,
N^{\dag})$ is Ulrich with respect to $I$.

Finally, because $M$ and $N$ are non-zero, (\ref{iso}) yields $\textrm{Hom}_{R} (M, N^{\dag}) \neq 0$.
Now we apply Lemma \ref{main-lemma}(b).\qed
\end{proof}



\begin{corollary}\label{cor1.ulrich.tensor.products}
Let $M$ be an Ulrich $R$-module with respect to
$I$. Suppose $I$ is Gorenstein and
$\textrm{\emph{Ext}}_{R}^i(M, M)=0$ for all $i=1, \ldots, d$.
Then the following conditions are equivalent:
\begin{itemize}
\item[(i)] $M \otimes_R M^{\dag}$ is an Ulrich
$R$-module with respect to $I$;
             \item[(ii)] $\textrm{\emph{End}}_{R}M$ is an Ulrich
$R$-module with respect to $I$;
             \item[(iii)] ${\rm End}_{R}M/I {\rm End}_{R}M$ is
             $R/I$-free;
             \item[(iv)] ${\rm Hom}_{R/Q} (M/QM, R/I)$ is
             $R/I$-free.
\end{itemize}
\end{corollary}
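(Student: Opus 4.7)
The plan is to derive this corollary as a direct specialization of Theorem \ref{teo.ulrich.tensor.products}, taking $N = M^{\dag}$. First I would check that the hypotheses of Theorem \ref{teo.ulrich.tensor.products} are met for this choice. Since $M$ is Ulrich with respect to $I$, it is in particular maximal Cohen-Macaulay, so the hypothesis on $M$ in the theorem is satisfied. Moreover, because $I$ is Gorenstein, Lemma \ref{ulr} applied to $M$ guarantees that $M^{\dag}$ is itself Ulrich with respect to $I$, so the hypothesis on $N = M^{\dag}$ holds as well.

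Next I would translate the Ext-vanishing assumption. Standard biduality for maximal Cohen-Macaulay modules over a Cohen-Macaulay local ring admitting a canonical module yields $(M^{\dag})^{\dag} \cong M$, hence $N^{\dag} \cong M$. Therefore
\[
\textrm{Ext}_{R}^i(M, N^{\dag}) \,\cong\, \textrm{Ext}_{R}^i(M, M) \,=\, 0 \quad \text{for all } i=1, \ldots, d,
\]
which is exactly the Ext-vanishing required to invoke Theorem \ref{teo.ulrich.tensor.products}.

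Finally I would rewrite each of the four equivalent conditions of Theorem \ref{teo.ulrich.tensor.products} under the substitution $N = M^{\dag}$: condition (i) becomes ``$M\otimes_R M^{\dag}$ is Ulrich with respect to $I$''; condition (ii) becomes ``$\textrm{Hom}_{R}(M, M) = \textrm{End}_R M$ is Ulrich with respect to $I$''; condition (iii) becomes ``$\textrm{End}_R M / I \textrm{End}_R M$ is $R/I$-free''; and condition (iv) is unchanged, namely ``$\textrm{Hom}_{R/Q}(M/QM, R/I)$ is $R/I$-free''. These are precisely the four statements of the corollary, so the equivalences follow at once from Theorem \ref{teo.ulrich.tensor.products}.

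There is no real obstacle here; the only subtlety worth mentioning explicitly is the use of the reflexivity isomorphism $M \cong M^{\dag\dag}$, which is valid because $M$ is maximal Cohen-Macaulay over a Cohen-Macaulay local ring with canonical module. Everything else is a matter of unwinding the notation. \qed
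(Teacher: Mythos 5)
Your proof is correct and follows essentially the same route as the paper: apply Lemma \ref{ulr} to conclude $M^{\dag}$ is Ulrich with respect to $I$, then specialize Theorem \ref{teo.ulrich.tensor.products} with $N = M^{\dag}$. You merely spell out the biduality step $N^{\dag} = (M^{\dag})^{\dag} \cong M$ justifying why the Ext-vanishing hypothesis transfers, which the paper leaves implicit.
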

\begin{proof}
Since $I$ is Gorenstein and $M$ is Ulrich with respect to $I$, Lemma \ref{ulr} yields that
$M^{\dag}$ is Ulrich with respect to $I$. Now, we employ Theorem
\ref{teo.ulrich.tensor.products} with $N = M^{\dag}$. \qed
\end{proof}




\medskip

Applying Theorem \ref{teo.ulrich.tensor.products} with $I =
{\mathfrak m }$ as in the classical context, and taking Remark \ref{obs.I.urich.module} into
account, we immediately obtain the
following consequence which is the Ulrich version of Lemma \ref{L-M}.

\begin{corollary}\label{cor3.ulrich.tensor.products}
Let $M$ be a maximal Cohen-Macaulay $R$-module
and $N$ an Ulrich $R$-module. If $\textrm{\emph{Ext}}_{R}^i(M,
N^{\dag})=0$ for all $i=1, \ldots, d$, then $M\otimes_R N$ is an Ulrich $R$-module.
\end{corollary}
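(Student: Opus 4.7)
The plan is to specialize Theorem \ref{teo.ulrich.tensor.products} to the case $I=\mathfrak{m}$, where ``Ulrich with respect to $I$'' collapses to ``Ulrich'' in the classical sense of Definition \ref{def.ulrich.module} by Remark \ref{obs.I.urich.module}. So the strategy is simply to check that all hypotheses of Theorem \ref{teo.ulrich.tensor.products} are met with this choice, and that condition (iv) of that theorem holds automatically.

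First I would observe that $I=\mathfrak{m}$ is trivially a Gorenstein ideal, since the quotient $R/\mathfrak{m}$ is a field (in particular a Gorenstein ring). By Remark \ref{obs.I.urich.module}, the hypothesis that $N$ is Ulrich is the same as $N$ being Ulrich with respect to $\mathfrak{m}$, so the Ulrichness hypothesis on $N$ in Theorem \ref{teo.ulrich.tensor.products} is satisfied. The maximal Cohen-Macaulayness of $M$ and the vanishing $\textrm{Ext}_R^i(M, N^{\dag})=0$ for $i=1,\ldots,d$ are direct hypotheses.

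Next I would verify condition (iv) of Theorem \ref{teo.ulrich.tensor.products}, namely that $\textrm{Hom}_{R/Q}(M/QM, R/\mathfrak{m})$ is a free $R/\mathfrak{m}$-module, where $Q\subset \mathfrak{m}$ is the parameter reduction from Convention \ref{convention}. But $R/\mathfrak{m}$ is a field, and any module over a field is free, so this condition is vacuous.

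Applying the equivalence (i)$\Leftrightarrow$(iv) of Theorem \ref{teo.ulrich.tensor.products} then yields that $M\otimes_RN$ is Ulrich with respect to $\mathfrak{m}$, which by Remark \ref{obs.I.urich.module} means $M\otimes_RN$ is Ulrich in the sense of Definition \ref{def.ulrich.module}. There is essentially no obstacle: the entire content of the corollary is extracted from Theorem \ref{teo.ulrich.tensor.products} once one observes that the freeness condition degenerates over a field.
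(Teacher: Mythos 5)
Your proposal is correct and follows essentially the same route as the paper: the paper derives this corollary by applying Theorem \ref{teo.ulrich.tensor.products} with $I=\mathfrak{m}$ together with Remark \ref{obs.I.urich.module}. You have simply spelled out the detail that the paper leaves implicit, namely that $\mathfrak{m}$ is a Gorenstein ideal and that condition (iv) of the theorem is automatic because $\textrm{Hom}_{R/Q}(M/QM, R/\mathfrak{m})$ is a vector space over the field $R/\mathfrak{m}$, hence free.
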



\begin{corollary}\label{cor4.ulrich.tensor.products}
Let $M$ be an Ulrich $R$-module. If
$\textrm{\emph{Ext}}_{R}^i(M, M)=0$ for all $i=1, \ldots, d$,
then $M \otimes_R M^{\dag}$ is an Ulrich
$R$-module.
\end{corollary}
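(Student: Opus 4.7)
The plan is to reduce the statement directly to Corollary \ref{cor3.ulrich.tensor.products} by choosing $N = M^{\dag}$. That corollary asks for three ingredients: that $M$ be maximal Cohen-Macaulay, that the chosen $N$ be Ulrich, and that $\textrm{Ext}_{R}^i(M, N^{\dag}) = 0$ for $i = 1, \ldots, d$.

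Condition one is immediate: $M$ is Ulrich, hence maximal Cohen-Macaulay by Definition \ref{def.ulrich.module}. For the second ingredient I would invoke Lemma \ref{ulr} with $I = \mathfrak{m}$, noting that the hypothesis "$I$ is Gorenstein" is trivially satisfied since $R/\mathfrak{m}$ is a field. In view of Remark \ref{obs.I.urich.module}, being Ulrich with respect to $\mathfrak{m}$ coincides with being Ulrich in the sense of Definition \ref{def.ulrich.module}, so Lemma \ref{ulr} translates into the statement that $M^{\dag}$ is an Ulrich $R$-module (in the classical sense).

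For the third ingredient, I would use biduality. Since $M$ is maximal Cohen-Macaulay and $R$ admits a canonical module, the canonical biduality map yields $N^{\dag} = M^{\dag\dag} \cong M$. Consequently,
\[
\textrm{Ext}_{R}^i(M, N^{\dag}) \;\cong\; \textrm{Ext}_{R}^i(M, M) \;=\; 0 \qquad (i=1, \ldots, d),
\]
by the assumption of the corollary.

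With these three ingredients verified, Corollary \ref{cor3.ulrich.tensor.products} applies to the pair $(M, M^{\dag})$ and delivers at once that $M \otimes_R M^{\dag}$ is an Ulrich $R$-module. I do not anticipate a serious obstacle: the argument is essentially a specialization of the stronger Corollary \ref{cor3.ulrich.tensor.products} to the self-dual setting, and the only nontrivial verification is the Ulrichness of $M^{\dag}$ via Lemma \ref{ulr} with $I = \mathfrak{m}$. Alternatively, one could run the same argument through the equivalence (i) $\Leftrightarrow$ (ii) of Corollary \ref{cor1.ulrich.tensor.products} with $I = \mathfrak{m}$, but the direct route via Corollary \ref{cor3.ulrich.tensor.products} is shorter.
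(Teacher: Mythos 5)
Your proof is correct and is essentially the intended argument: the paper leaves Corollary \ref{cor4.ulrich.tensor.products} without explicit proof, presenting it as an immediate consequence of the preceding corollaries with $I = \mathfrak{m}$, and your reduction to Corollary \ref{cor3.ulrich.tensor.products} with $N = M^{\dagger}$ (using Lemma \ref{ulr} to get Ulrichness of $M^{\dagger}$ and biduality $M^{\dagger\dagger}\cong M$ to translate the Ext-vanishing hypothesis) is exactly that. The alternative route you mention through Corollary \ref{cor1.ulrich.tensor.products} would also work, since with $I=\mathfrak{m}$ condition (iv) there is automatic over the residue field.
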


\subsection{More results} In this subsection, we establish further results on the Ulrichness of tensor products.

\begin{theorem}\label{teo3.ulrich.tensor.products}
Let $M, N$ be finite $R$-modules such that
$$\textrm{\emph{Ext}}_{R}^i(\textrm{\emph{Tr}} M, N)=0 \quad \mbox{for \,all} \quad 
i=1, \ldots, d+1.$$ Assume that $I$ is Gorenstein. In addition, suppose
$M/IM$ is a free $R/I$-module, $M^*$ is Ulrich with respect to $I$, and $N$ is maximal Cohen-Macaulay {\rm (}resp. $M^*$ is maximal Cohen-Macaulay and $N$ is Ulrich with respect to $I${\rm )}. Then $M \otimes_R N$ is Ulrich with
respect to $I$ if and only if $N/I N$ {\rm (}resp.\, $M/I
M${\rm )} is a free $R/I$-module.
\end{theorem}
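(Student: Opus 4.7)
The plan is to reduce the claim to Lemma~\ref{main-lemma}(a) via the Auslander-Bridger-type isomorphism $M \otimes_R N \cong \textrm{Hom}_R(M^*, N)$, which is made available by the given Ext-vanishing hypothesis. Starting from a minimal free presentation $R^n \to R^m \to M \to 0$ and its dualization $0 \to M^* \to R^m \to R^n \to \textrm{Tr}\,M \to 0$, a standard diagram chase (splicing this into the two short exact sequences $0 \to M^* \to R^m \to K \to 0$ and $0 \to K \to R^n \to \textrm{Tr}\,M \to 0$ and comparing with the presentation of $M$ after applying $-\otimes_R N$) produces the classical four-term exact sequence
\[
0 \to \textrm{Ext}^1_R(\textrm{Tr}\,M, N) \to M \otimes_R N \to \textrm{Hom}_R(M^*, N) \to \textrm{Ext}^2_R(\textrm{Tr}\,M, N) \to 0.
\]
The vanishing of the two outer Ext modules (covered by the hypothesis, since $d+1\geq 2$) yields the isomorphism.

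Next, I verify the maximal Cohen-Macaulayness of $M\otimes_R N$ and the Ext-vanishing needed to invoke Lemma~\ref{main-lemma} on the pair $(M^*, N)$. For MCM-ness: in both settings $N$ is MCM (by hypothesis in the first, because Ulrich implies MCM in the second), and $\textrm{Ext}^i_R(\textrm{Tr}\,M, N)=0$ for $i=1,\ldots,d$; Lemma~\ref{L-M} applied to $(\textrm{Tr}\,M, N)$ thus gives that $\textrm{Tr}\,M \otimes_R N^{\dagger}$ is MCM, and Lemma~\ref{Taka} transfers this to $M\otimes_R N$. For the Ext vanishing, the two short exact sequences above yield, by dimension-shifting twice, $\textrm{Ext}^i_R(M^*, N) \cong \textrm{Ext}^{i+2}_R(\textrm{Tr}\,M, N)$ for all $i\geq 1$, and the hypothesis forces $\textrm{Ext}^i_R(M^*, N)=0$ for $i=1,\ldots,d-1$.

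With $M^*$ and $N$ both MCM, $\textrm{Hom}_R(M^*, N)\cong M\otimes_R N \neq 0$, the Ext vanishing just established, and either $M^*$ (first setting) or $N$ (second setting) Ulrich with respect to $I$, Lemma~\ref{main-lemma}(a) applies (with $n=d-1$, or vacuously if $d=1$) and tells us that $\textrm{Hom}_R(M^*, N)$ is Ulrich with respect to $I$ if and only if the quotient $\textrm{Hom}_R(M^*, N)/I\,\textrm{Hom}_R(M^*, N)$ is $R/I$-free. Combining this with the base-change isomorphism $(M\otimes_R N)/I(M\otimes_R N) \cong (M/IM)\otimes_{R/I}(N/IN)$ finishes the argument. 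In the first setting, $M/IM\cong (R/I)^{\nu(M)}$ identifies the tensor product with $(N/IN)^{\nu(M)}$, which is $R/I$-free exactly when $N/IN$ is. In the second setting, $N$ Ulrich forces $N/IN\cong (R/I)^{\nu(N)}$, so the tensor becomes $(M/IM)^{\nu(N)}$, which is $R/I$-free exactly when $M/IM$ is.

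The key structural observation behind the proof is that the hypothesized range $i\in\{1,\ldots,d+1\}$ of Ext vanishing is calibrated to serve two purposes simultaneously: the low end $\{1,2\}$ delivers the Auslander-Bridger isomorphism $M\otimes_R N \cong \textrm{Hom}_R(M^*, N)$, while the top end $\{3,\ldots,d+1\}$ translates, after the two-step shift, into precisely the vanishing $\textrm{Ext}^i_R(M^*, N)=0$ for $i\leq d-1$ required by Lemma~\ref{main-lemma}(a). The main technical point to get right is the careful construction of the four-term exact sequence above; beyond that, the argument is a matter of tracking which symmetric case of Lemma~\ref{main-lemma} is being invoked and checking that $(M\otimes_R N)/I(M\otimes_R N)$ factors in the required way.
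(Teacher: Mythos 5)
Your proof is correct and follows essentially the same route as the paper: derive $M\otimes_RN\cong\operatorname{Hom}_R(M^*,N)$ from the four-term sequence of Proposition 12.5 in Leuschke--Wiegand, shift dimensions to get $\operatorname{Ext}^i_R(M^*,N)=0$ for $i\le d-1$, apply Lemma \ref{main-lemma}(a) to the pair $(M^*,N)$, and finish by base-changing $(M\otimes_RN)/I(M\otimes_RN)$. The only deviation is your separate verification of maximal Cohen--Macaulayness of $M\otimes_RN$ via Lemmas \ref{L-M} and \ref{Taka}; this is harmless but redundant, since Lemma \ref{main-lemma}(a) already delivers full Ulrichness (hence MCM-ness) of $\operatorname{Hom}_R(M^*,N)$ from the freeness of its reduction modulo $I$.
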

\begin{proof} By \cite[Proposition 12.5]{LeuschkeWiegand2012}, we
have the exact sequence
$$0 \longrightarrow \textrm{Ext}_{R}^1(\textrm{Tr} M, N)
\longrightarrow M \otimes_R N \longrightarrow \textrm{Hom}_{R}(M^*, N)
\longrightarrow \textrm{Ext}_{R}^2(\textrm{Tr} M, N) \longrightarrow 0.$$
As in particular $\textrm{Ext}_{R}^i(\textrm{Tr} M, N)=0$ for $i = 1, 2$, it
follows that $M \otimes_R N \cong \textrm{Hom}_{R}(M^*, N)$.
Now, notice that $$\textrm{Ext}_{R}^i(M^*, N) =
\textrm{Ext}_{R}^i({\rm Syz}_2\textrm{Tr} M, N) =
\textrm{Ext}_{R}^{i+2}(\textrm{Tr} M, N) =0,$$ for all $i=1,
\ldots, d-1$ (in case $d\geq 2$), where ${\rm Syz}_2$ denotes the second-syzygy operator over $R$. First, suppose
$M/IM$ is a free $R/I$-module, $M^*$ is Ulrich with respect to $I$, and $N$ is maximal Cohen-Macaulay. Then by Lemma \ref{main-lemma}(a), the module $M \otimes_R N$ is Ulrich with
respect to $I$ if and only if $R / I \otimes_R (M \otimes_R
N)$ is a free $R/I$-module. Since $R / I \otimes_R M \cong
M/I M \cong (R / I)^m$ for some integer $m > 0$, we get
\begin{eqnarray*}
R / I \otimes_R (M \otimes_R N) &\cong& (R / I \otimes_R M)
\otimes_{R/I} (R / I \otimes_R N) \\ &\cong&  (R / I)^m
\otimes_{R/I} (N/IN)\cong (N/ I N)^{\oplus m}. \end{eqnarray*} Therefore, $R / I
\otimes_R (M \otimes_R N)$ is a free $R/I$-module if and only if
$N/I N$ is a free $R/I$-module. The case where $M^*$ is maximal Cohen-Macaulay and $N$ is Ulrich with respect to $I$ is completely similar. \qed
\end{proof}

\medskip

Next we derive a number of corollaries in the Gorenstein case.

\begin{corollary}\label{corteo3.ulrich.tensor.products}
Suppose $R$ is Gorenstein. Let $M,N$ be
finite $R$-modules with
$\textrm{\emph{Ext}}_{R}^i(\textrm{\emph{Tr}} M, N)=0$ for all
$i=1, \ldots, d+1$. Assume that $I$ is Gorenstein. In addition, suppose
$M$ is Ulrich  with respect
to $I$ and $N$ is maximal Cohen-Macaulay. Then $M \otimes_R N$ is Ulrich with
respect to $I$ if and only if $N/I N$ is a free $R/I$-module.
\end{corollary}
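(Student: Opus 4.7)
The plan is to derive this corollary directly from Theorem \ref{teo3.ulrich.tensor.products} by showing that, under the Gorenstein hypothesis on $R$, the Ulrichness of $M$ with respect to $I$ already implies the three extra assumptions needed to invoke the first (``resp.'') case of that theorem, namely (a) $M/IM$ is $R/I$-free, (b) $M^*$ is Ulrich with respect to $I$, and (c) $N$ is maximal Cohen-Macaulay. The third is part of the hypothesis, and the first is immediate from condition (iii) of Definition \ref{def.I.urich.module}.

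The only substantive step is (b). Since $R$ is Gorenstein, we have $\omega_R\cong R$, so $M^*\cong M^{\dag}$. Because $M$ is Ulrich with respect to $I$, in particular $M$ is maximal Cohen-Macaulay, and $I$ is Gorenstein by hypothesis; hence Lemma \ref{ulr} applies and yields that $M^{\dag}$ is Ulrich with respect to $I$. Transporting through the isomorphism $M^*\cong M^{\dag}$ gives that $M^*$ is Ulrich with respect to $I$, which is exactly (b).

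With (a)--(c) in hand, and the vanishing hypothesis $\textrm{Ext}_R^i(\textrm{Tr}\,M,N)=0$ for $i=1,\ldots,d+1$ already assumed, Theorem \ref{teo3.ulrich.tensor.products} (in the form of its first ``resp.'' alternative) applies verbatim and delivers the desired equivalence: $M\otimes_R N$ is Ulrich with respect to $I$ if and only if $N/IN$ is a free $R/I$-module. The argument requires no further calculation; I do not foresee a genuine obstacle, the only point needing care being the identification $M^*\cong M^{\dag}$ that bridges the hypothesis ``$M$ is Ulrich'' and the hypothesis ``$M^*$ is Ulrich'' appearing in Theorem \ref{teo3.ulrich.tensor.products}.
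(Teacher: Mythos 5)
Your proof is correct and follows the paper's own argument essentially verbatim: both use the Gorenstein identification $M^*\cong M^{\dag}$ together with Lemma \ref{ulr} to upgrade ``$M$ Ulrich with respect to $I$'' to ``$M^*$ Ulrich with respect to $I$'', then invoke Theorem \ref{teo3.ulrich.tensor.products}. The only difference is that you spell out the verification of the remaining hypotheses (that $M/IM$ is $R/I$-free and $N$ is maximal Cohen--Macaulay), which the paper leaves implicit.
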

\begin{proof} Since $R$ is Gorenstein, Lemma \ref{ulr} gives that $M^*$ is Ulrich with
respect to $I$. Now we apply the theorem above.\qed
\end{proof}


\begin{remark}\rm Under the conditions of Corollary \ref{corteo3.ulrich.tensor.products}, if in addition we suppose $IN=QN$, then we derive that $M \otimes_R N$ is Ulrich with
respect to $I$ if and only if $N$ is Ulrich with
respect to $I$.
\end{remark}


\begin{corollary}\label{cor1.ulrich.tensor.products3}
Suppose $R$ is Gorenstein. Let $M, N$ be
Ulrich $R$-modules with respect to $I$ such that
$\textrm{\emph{Ext}}_{R}^i(\textrm{\emph{Tr}} M, N)=0$ for all
$i=1, \ldots, d+1$. Assume that $I$ is Gorenstein.
Then $M \otimes_R N$ is Ulrich  with respect to
$I$.
\end{corollary}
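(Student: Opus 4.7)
The plan is to simply invoke Corollary \ref{corteo3.ulrich.tensor.products} from the preceding result. Observe that all the hypotheses of that corollary are already in place: $R$ is Gorenstein, $I$ is Gorenstein, $M$ is Ulrich with respect to $I$, the Ext-vanishing $\mathrm{Ext}_R^i(\mathrm{Tr}\,M,N)=0$ for $i=1,\ldots,d+1$ is assumed, and $N$ being Ulrich with respect to $I$ in particular implies that $N$ is maximal Cohen-Macaulay (condition (i) of Definition \ref{def.I.urich.module}).

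Thus Corollary \ref{corteo3.ulrich.tensor.products} applies and reduces the claim to showing that $N/IN$ is a free $R/I$-module. But this is precisely condition (iii) in Definition \ref{def.I.urich.module}, which is built into the hypothesis that $N$ is Ulrich with respect to $I$. Hence $M\otimes_R N$ is Ulrich with respect to $I$, and the proof is complete.

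There is essentially no obstacle here; the result is a direct specialization of Corollary \ref{corteo3.ulrich.tensor.products} obtained by strengthening the assumption on $N$ from maximal Cohen-Macaulay to Ulrich with respect to $I$, which automatically supplies the freeness of $N/IN$ over $R/I$ needed in the ``if'' direction. The only thing worth flagging is that one should explicitly point the reader to Definition \ref{def.I.urich.module}(iii) so that it is clear why the free-quotient hypothesis comes for free from $N$ being Ulrich with respect to $I$.
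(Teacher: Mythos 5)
Your proof is correct and is exactly the argument the paper intends (the paper gives no written proof for this corollary precisely because it is an immediate specialization of Corollary \ref{corteo3.ulrich.tensor.products}). You correctly identify that the strengthening of $N$ from maximal Cohen-Macaulay to Ulrich with respect to $I$ supplies both the MCM hypothesis via Definition \ref{def.I.urich.module}(i) and the freeness of $N/IN$ over $R/I$ via Definition \ref{def.I.urich.module}(iii), so the ``if'' direction of that corollary applies directly.
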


\begin{corollary}\label{cor2.ulrich.tensor.products3}
Suppose $R$ is Gorenstein. Let $M,N$ be
maximal Cohen-Macaulay $R$-modules such that
$\textrm{\emph{Ext}}_{R}^i(\textrm{\emph{Tr}} M, N)=0$ for all
$i=1, \ldots, d+1$. Assume that $M \otimes_R N$ is  Ulrich
with respect to $Q$. Then $M$ is Ulrich with respect to $Q$ if and only if $N$ is Ulrich with respect to $Q$.
\end{corollary}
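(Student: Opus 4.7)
The plan is to derive the corollary as a direct two-directional application of Theorem~\ref{teo3.ulrich.tensor.products}, taking $I = Q$; this is compatible with Convention~\ref{convention} since any parameter ideal is trivially a reduction of itself. Two standard Gorenstein-duality facts will be used throughout: first, since $R$ is Gorenstein and $Q$ is generated by a regular sequence, $R/Q$ is a zero-dimensional Gorenstein ring, so $Q$ is a Gorenstein ideal in the paper's sense; second, since $R$ is Gorenstein, the $R$-dual $M^{*}$ of any maximal Cohen-Macaulay module $M$ is again maximal Cohen-Macaulay.

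For the forward direction, assume $M$ is Ulrich with respect to $Q$. I would first invoke Lemma~\ref{ulr} (applicable because $Q$ is Gorenstein) to upgrade this to the Ulrichness of $M^{*}$ with respect to $Q$. The Ulrichness of $M$ also gives that $M/QM$ is $R/Q$-free. Since the vanishing $\textrm{Ext}_{R}^{i}(\textrm{Tr}\,M, N) = 0$ for $i = 1, \dots, d+1$ is exactly the hypothesis of Theorem~\ref{teo3.ulrich.tensor.products}, its first alternative (with $I = Q$) applies and yields the equivalence: $M \otimes_{R} N$ is Ulrich with respect to $Q$ if and only if $N/QN$ is $R/Q$-free. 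Since the left-hand side holds by hypothesis, $N/QN$ is $R/Q$-free, and combined with the maximal Cohen-Macaulayness of $N$ this gives that $N$ is Ulrich with respect to $Q$.

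The reverse direction is structurally the mirror image. Assume $N$ is Ulrich with respect to $Q$. The second alternative of Theorem~\ref{teo3.ulrich.tensor.products} now applies: $M^{*}$ is maximal Cohen-Macaulay by Gorenstein duality, $N$ is Ulrich with respect to $Q$ by assumption, and the Ext vanishing together with the Gorensteinness of $Q$ come from the corollary's hypotheses. The theorem then reads: $M \otimes_{R} N$ is Ulrich with respect to $Q$ if and only if $M/QM$ is $R/Q$-free. Since the left-hand side holds by hypothesis, $M/QM$ is $R/Q$-free, and combined with $M$ being maximal Cohen-Macaulay this gives that $M$ is Ulrich with respect to $Q$.

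There is no serious obstacle in this argument; the corollary is essentially a symmetric repackaging of Theorem~\ref{teo3.ulrich.tensor.products}. The only point worth verifying carefully is the matching of the two configurations in the ``(resp.)'' clause of that theorem with the two directions of the biconditional: the Ulrichness of the ``source'' factor in each direction is precisely what supplies, via Lemma~\ref{ulr} on one side and via Gorenstein duality for the $R$-dual on the other, the remaining hypothesis needed to trigger the corresponding alternative in Theorem~\ref{teo3.ulrich.tensor.products}.
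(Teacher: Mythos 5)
Your proof is correct and follows exactly the route the paper intends: note that $Q$ is a Gorenstein ideal because $R$ is Gorenstein and $Q$ is generated by a regular sequence, and then specialize Theorem~\ref{teo3.ulrich.tensor.products} with $I=Q$, using Lemma~\ref{ulr} (plus $M^{\dagger}\cong M^{*}$ in the Gorenstein case) for the first alternative and Gorenstein duality for the maximal Cohen-Macaulayness of $M^{*}$ in the second. The paper's own proof is a two-line reduction to that theorem, and your write-up simply fills in the same bookkeeping.
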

\begin{proof}
Since $R$ is Gorenstein and $Q$ is generated by an $R$-sequence, $Q$ must be a Gorenstein ideal (see, e.g., \cite[Proposition 3.1.19(b)]{CMr}). 
Now the assertion is clear by Theorem
\ref{teo3.ulrich.tensor.products} with $I = Q$.\qed
\end{proof}

\begin{corollary}\label{cor3.ulrich.tensor.products3}
Suppose $R$ is Gorenstein. Let $M,N$ be
maximal Cohen-Macaulay $R$-modules such that
$\textrm{\emph{Ext}}_{R}^i(\textrm{\emph{Tr}} M, N)=0$ for all
$i=1, \ldots, d+1$. If $M$ or $N$ is Ulrich, then $M\otimes_R N$ is Ulrich. 
\end{corollary}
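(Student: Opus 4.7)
The plan is to deduce the corollary as a direct specialization of Theorem \ref{teo3.ulrich.tensor.products} (or its consequence, Corollary \ref{corteo3.ulrich.tensor.products}) by taking $I = \mathfrak{m}$. Two simple observations simultaneously trivialize several of the hypotheses in those results: by Remark \ref{obs.I.urich.module}, a module is Ulrich in the classical sense of Definition \ref{def.ulrich.module} precisely when it is Ulrich with respect to $\mathfrak{m}$; and since $R/\mathfrak{m}$ is a field, the ideal $\mathfrak{m}$ is automatically Gorenstein, and \emph{every} $R/\mathfrak{m}$-module is free.

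If $M$ is Ulrich, I would apply Corollary \ref{corteo3.ulrich.tensor.products} with $I = \mathfrak{m}$: the hypothesis that $N$ is maximal Cohen-Macaulay is given, and the stated equivalence reduces to checking that $N/\mathfrak{m}N$ is an $R/\mathfrak{m}$-free module, which is automatic.

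If instead $N$ is Ulrich, I would invoke the parenthetical (\emph{resp.}) branch of Theorem \ref{teo3.ulrich.tensor.products} with $I = \mathfrak{m}$. The three required hypotheses are that $M/\mathfrak{m}M$ is $R/\mathfrak{m}$-free (automatic), that $M^{*}$ is maximal Cohen-Macaulay (which holds since $M$ is maximal Cohen-Macaulay over the Gorenstein ring $R$), and that $N$ is Ulrich with respect to $\mathfrak{m}$ (given). The theorem then yields that $M \otimes_R N$ is Ulrich if and only if $M/\mathfrak{m}M$ is $R/\mathfrak{m}$-free, which is once again automatic.

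There is essentially no obstacle here: the Ext-vanishing assumption $\textrm{Ext}_R^{i}(\textrm{Tr}\,M, N) = 0$ for $i = 1, \ldots, d+1$ is phrased in precisely the form required by Theorem \ref{teo3.ulrich.tensor.products} in both cases, and the asymmetry between $M$ and $N$ in the hypothesis is exactly what the two branches of that theorem are designed to accommodate. The only conceptual content of the proof is therefore the recognition that, over a residue field, every module is free and hence the final conditions in the equivalences collapse.
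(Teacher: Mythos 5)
Your proposal is correct and takes essentially the same route as the paper, whose proof is the one-line specialization of Theorem \ref{teo3.ulrich.tensor.products} to $I = \mathfrak{m}$. You simply spell out what the paper leaves implicit: routing the $M$-Ulrich case through Corollary \ref{corteo3.ulrich.tensor.products} (which already packages the Lemma \ref{ulr} step turning $M$ Ulrich into $M^*$ Ulrich over the Gorenstein ring) and the $N$-Ulrich case through the parenthetical branch of the theorem, then observing that $\mathfrak{m}$ is a Gorenstein ideal and that every $R/\mathfrak{m}$-module is free, so the residual conditions in the stated equivalences hold automatically.
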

\begin{proof}
Apply Theorem \ref{teo3.ulrich.tensor.products} in the classical situation where $I =
{\mathfrak m }$. \qed
\end{proof}

\subsection{The case $N=\omega_R$}
In this part we investigate the Ulrichness of  $M \otimes_R \omega_R$. If $M$ is a finite $R$-module, then as usual we denote by $\textrm{{G-dim}}_RM$ its Gorenstein dimension (see \cite{AuB} for the theory). We start with the following result.

\begin{theorem}\label{teo2.ulrich.tensor.products}
Let $M$ be a finite $R$-module such
that $\textrm{\emph{G-dim}}_RM < \infty$.
\begin{itemize}
\item[(i)] Suppose $I$ is Gorenstein and
$M^*/I M^*$ is $R/I$-free. If $M$ is Ulrich  with
respect to $I$, then $M \otimes_R \omega_R$ is Ulrich with respect to $I$.
             \item[(ii)] Suppose $M/I M$ is $R/I$-free. If
$M \otimes_R \omega_R$ is Ulrich with respect to
$I$, then $M$ is Ulrich with respect to $I$.
\end{itemize}
\end{theorem}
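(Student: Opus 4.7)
The plan is to reduce both parts of the theorem to properties of $M^*$ via the key isomorphism
$$(M\otimes_R\omega_R)^\dagger \cong M^*,$$
coming from Hom-tensor adjunction together with $\textrm{Hom}_R(\omega_R,\omega_R)\cong R$. Lemma \ref{ulr}, applied when $I$ is Gorenstein, will then swap the Ulrichness of $M\otimes_R\omega_R$ with the Ulrichness of $M^*$, while in the opposite direction I will use a length-versus-multiplicity comparison together with a Matlis-duality trick to extract the Ulrich conditions on $M$ itself.

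For part (i), I would proceed as follows. The assumption that $M$ is Ulrich forces $M$ to be maximal Cohen-Macaulay, so the hypothesis $\textrm{G-dim}_RM<\infty$ combined with the Auslander-Bridger formula yields $\textrm{G-dim}_RM=0$; hence $M$ is totally reflexive and $\textrm{Ext}^i_R(M,R)=0$ for all $i\geq 1$. Applying Lemma \ref{L-M} with $N=R$ (noting $R^\dagger\cong\omega_R$), I would conclude that $M\otimes_R\omega_R$ is maximal Cohen-Macaulay. Since $I$ is Gorenstein, Lemma \ref{ulr} reduces the Ulrichness of $M\otimes_R\omega_R$ to the Ulrichness of $(M\otimes_R\omega_R)^\dagger\cong M^*$. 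It remains to verify that $M^*$ is Ulrich with respect to $I$: it is MCM (as $M$ is totally reflexive); the freeness of $M^*/IM^*$ over $R/I$ is given; and the condition $IM^*=QM^*$ will follow from the standard isomorphism $M^*/QM^*\cong \textrm{Hom}_{R/Q}(M/QM,R/Q)$ (valid because $M$ is MCM and $Q$ is generated by an $R$-regular sequence) combined with $M/QM\cong (R/I)^{\nu(M)}$ (a consequence of $M$ being Ulrich with respect to $I$), since $\textrm{Hom}_{R/Q}(R/I,R/Q)$ is annihilated by $I/Q$.

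For part (ii), I would first extract from the Ulrichness of $M\otimes_R\omega_R$ that it is MCM and that $I(M\otimes_R\omega_R)=Q(M\otimes_R\omega_R)$. Passing modulo $Q$ and using $\omega_R/Q\omega_R\cong \omega_{R/Q}$, this multiplicity condition translates, via the identification
$$(M\otimes_R\omega_R)/Q(M\otimes_R\omega_R)\cong (M/QM)\otimes_{R/Q}\omega_{R/Q},$$
into $(I/Q)\cdot\bigl((M/QM)\otimes_{R/Q}\omega_{R/Q}\bigr)=0$. Setting $Y:=(I/Q)(M/QM)$, one has $Y\otimes_{R/Q}\omega_{R/Q}=0$, and then Matlis duality over the Artinian local ring $R/Q$ (for which $\omega_{R/Q}$ is the dualizing module) gives $\textrm{Hom}_{R/Q}(Y,R/Q)=0$. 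Since any nonzero finitely generated module over an Artinian local ring admits a nonzero map to the ring (via the composition $Y\twoheadrightarrow Y/\mathfrak{m}Y\twoheadrightarrow k\hookrightarrow \textrm{soc}(R/Q)\subseteq R/Q$), $Y=0$ follows, hence $IM=QM$. For maximal Cohen-Macaulayness of $M$, I would use that $\textrm{Supp}(\omega_R)=\textrm{Spec}(R)$ (so $\dim M=\dim(M\otimes_R\omega_R)=d$) and exploit the multiplicity-additivity formula (with $\textrm{rk}(\omega_R)=1$) to obtain $e_Q(M)=e_Q(M\otimes_R\omega_R)$. Writing $\omega_R/I\omega_R\cong (R/I)^s$ (a consequence of the $R/I$-freeness of $(M\otimes_R\omega_R)/I(M\otimes_R\omega_R)$), a direct length computation yields $e_Q(M)=\nu(M)\,s\,\ell(R/I)$, while $IM=QM$ and the given freeness give $\ell(M/QM)=\nu(M)\ell(R/I)$. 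The universal inequality $\ell(M/QM)\geq e_Q(M)$ then forces $s\leq 1$, hence $s=1$ and equality holds, so $M$ is MCM. Together with the given $R/I$-freeness of $M/IM$, this establishes that $M$ is Ulrich with respect to $I$.

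The principal obstacle is the final MCM step in part (ii): the length-multiplicity comparison in its cleanest form presupposes that $M$ admits a rank over $R$ and that $\textrm{rk}(\omega_R)=1$; the general setting will require passing to top-dimensional minimal primes and invoking multiplicity-additivity with more care. The Matlis-duality annihilator computation driving $IM=QM$ is also subtle, but reduces to the standard fact that $\textrm{Hom}_A(Y,A)\neq 0$ for any nonzero finitely generated $Y$ over an Artinian local ring $A$. Part (i), by comparison, is more direct once the duality $(M\otimes_R\omega_R)^\dagger\cong M^*$ is in hand.
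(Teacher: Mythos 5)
Your part (i) is essentially the paper's argument with one variation: instead of invoking Lemma~\ref{main-lemma}(a) to conclude $M^*$ is Ulrich, you verify the conditions directly by computing $M^*/QM^*\cong\textrm{Hom}_{R/Q}(M/QM,R/Q)\cong\textrm{Hom}_{R/Q}(R/I,R/Q)^{\nu(M)}$, which is killed by $I/Q$, giving $IM^*=QM^*$. That is a valid and slightly more hands-on route (the isomorphism $M^*/QM^*\cong\textrm{Hom}_{R/Q}(M/QM,R/Q)$ needs the vanishing of $\textrm{Ext}^i_R(M,R)$ for $i\ge 1$, which you have from total reflexivity). The identification $(M\otimes_R\omega_R)^\dagger\cong M^*$ via Hom-tensor adjunction and $\textrm{Hom}_R(\omega_R,\omega_R)\cong R$ matches the paper's $M\otimes_R\omega_R\cong(M^*)^\dagger$ since everything in sight is MCM, so Lemma~\ref{ulr} applies on either side.

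Part (ii), however, departs entirely from the paper's route and contains a genuine gap. The paper uses Foxby's theorem: finite G-dimension places $M$ in the Auslander class, so $M\cong\textrm{Hom}_R(\omega_R,M\otimes_R\omega_R)$ and $\textrm{Ext}^i_R(\omega_R,M\otimes_R\omega_R)=0$ for all $i\ge 1$; one then reads off the Ulrichness of $M$ from Lemma~\ref{main-lemma}(a). In your argument, the step from $(I/Q)\cdot\bigl((M/QM)\otimes_{R/Q}\omega_{R/Q}\bigr)=0$ to $Y\otimes_{R/Q}\omega_{R/Q}=0$, where $Y=(I/Q)(M/QM)$, is not valid. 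What follows from the hypothesis is only that the \emph{image} of the natural map $Y\otimes\omega_{R/Q}\to(M/QM)\otimes\omega_{R/Q}$ is zero; this map need not be injective since $\omega_{R/Q}$ is not flat when $R/Q$ is not Gorenstein, so you cannot conclude $Y\otimes\omega_{R/Q}=0$. Concretely, over $A=k[x,y]/(x^2,xy,y^2)$ one computes $\mathfrak{m}_A(\omega_A\otimes_A\omega_A)=0$ while $\mathfrak{m}_A\omega_A\neq 0$, so taking $\bar{I}=\mathfrak{m}_A$ and $\bar{M}=\omega_A$ gives a counterexample to the inference as stated. Your finite-G-dimension hypothesis would have to be brought to bear (as Foxby does) to close this hole, but you never use it at this point. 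In addition, the multiplicity-comparison argument for the MCM step silently requires $M$ to admit a rank and $\omega_R$ to have rank one; as you yourself flag, this is not assumed and is not merely a matter of ``more care'' with minimal primes -- without generic Gorensteinness of $R$ the localizations $(M\otimes\omega_R)_{\mathfrak p}$ and $M_{\mathfrak p}$ need not have equal length. Using Foxby's result directly, as the paper does, avoids both issues.
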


\begin{proof}
(i) Since $\textrm{G-dim}_RM < \infty$ and, in particular, $M$ is maximal
Cohen-Macaulay, then $\textrm{G-dim}_RM = 0$ by the Auslander-Bridger formula. Hence $\textrm{G-dim}_R\textrm{Tr}
M = 0$ (see \cite[Section 2, p.\,5793]{masek}), which by the same token forces $\textrm{depth}_R \textrm{Tr} M =
\textrm{depth}\,R=d$. Therefore,
$$\textrm{Ext}_{R}^i(\textrm{Tr} M, \omega_R)=0 \quad \mbox{for \,all} \quad i \geq 1.$$ On the other hand, \cite[Proposition 12.5]{LeuschkeWiegand2012} furnishes an exact sequence
$$0 \longrightarrow \textrm{Ext}_{R}^1(\textrm{Tr} M, \omega_R)
\longrightarrow M \otimes_R \omega_R \longrightarrow (M^*)^{\dagger} \longrightarrow \textrm{Ext}_{R}^2(\textrm{Tr} M, \omega_R)
\longrightarrow 0.$$ Thus $M \otimes_R \omega_R \cong
(M^*)^{\dagger}$. Now, since $\textrm{Ext}_{R}^i(M,R)=0$ for all $i\geq 1$ (because $\textrm{G-dim}_RM = 0$, which means $M$ is totally reflexive), Lemma \ref{main-lemma}(a) yields that $M^*$ is Ulrich with respect to $I$. So the module $M \otimes_R \omega_R \cong
(M^*)^{\dagger}$ is Ulrich with respect to $I$
by Lemma \ref{ulr}.

\medskip

(ii) Using \cite[Theorem 1]{foxby75} we obtain that the condition
$\textrm{G-dim}_RM < \infty$ implies 
$$M
\cong \textrm{Hom}_{R}(\omega_R, M \otimes_R \omega_R) \quad \mbox{and} \quad 
\textrm{Ext}_{R}^i(\omega_R, M \otimes_R \omega_R)=0 \quad \mbox{for \,all} \quad i\geq 1.$$
To conclude, we are in a position to apply Lemma \ref{main-lemma}(a) to obtain
that the $R$-module $M \cong \textrm{Hom}_{R}(\omega_R, M \otimes_R \omega_R)$ is
Ulrich with respect to $I$. \qed
\end{proof}

\medskip

For the consequence below (an Ulrich-based Gorensteiness criterion), recall that a local ring $R$ is said to be {\it generically Gorenstein} if,  for each minimal prime ${\mathfrak{p}}$ of $R$, the localization $R_{\mathfrak{p}}$ is Gorenstein; this holds if, e.g., $R$ is reduced. Also recall that, for a Cohen-Macaulay local ring $R$ with dimension $d$ and residue field $L$, the (Cohen-Macaulay) type of $R$ is defined as the vector space dimension ${\rm t}
(R)={\rm dim}_L{\rm Ext}_R^d(L, R)$. It is well-known that if $R$ admits a canonical module $\omega_R$, then ${\rm t}(R)=\nu(\omega_R)$.

\begin{corollary}\label{cor2.ulrich.tensor.products2} Suppose $R$ is generically Gorenstein. If there exists an Ulrich $R$-module $M$ with rank, such that $\mbox{\rm G-dim}_RM < \infty$, then $R$ is Gorenstein.
\end{corollary}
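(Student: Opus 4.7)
The plan is to show that the Cohen-Macaulay type ${\rm t}(R)=\nu(\omega_R)$ equals $1$. Since $M$ is Ulrich it is maximal Cohen-Macaulay, and combined with $\textrm{G-dim}_RM<\infty$ the Auslander-Bridger formula forces $\textrm{G-dim}_RM=0$; in particular $M$ is totally reflexive and $\textrm{Ext}_R^i(M,R)=0$ for all $i\geq 1$. Applying Lemma \ref{L-M} with $N=R$ (so that $N^{\dagger}=\textrm{Hom}_R(R,\omega_R)\cong\omega_R$), we obtain that $M\otimes_R\omega_R$ is maximal Cohen-Macaulay.

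Next, I would leverage the generic Gorensteinness of $R$ to give $\omega_R$ a rank. Because $R$ is Cohen-Macaulay its associated primes coincide with its minimal primes, at each of which $R_{\mathfrak{p}}$ is Artinian Gorenstein by hypothesis, so $(\omega_R)_{\mathfrak{p}}\cong\omega_{R_{\mathfrak{p}}}\cong R_{\mathfrak{p}}$. Hence $\omega_R$ has rank $1$, and consequently $M\otimes_R\omega_R$ has rank ${\rm rk}_RM$, which yields
$$\textrm{e}(M\otimes_R\omega_R)={\rm rk}_RM\cdot\textrm{e}(R)=\textrm{e}(M)=\nu(M),$$
where the last equality records that $M$ is Ulrich.

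For the closing step, I would invoke the general inequality $\nu(-)\leq\textrm{e}(-)$ for maximal Cohen-Macaulay modules (recalled at the end of Section \ref{conv+basics}) applied to $M\otimes_R\omega_R$, together with the elementary identity $\nu(M\otimes_R\omega_R)=\nu(M)\nu(\omega_R)=\nu(M)\cdot{\rm t}(R)$ obtained by tensoring with the residue field. This gives $\nu(M)\cdot{\rm t}(R)\leq\nu(M)$, and since $\nu(M)\geq 1$ we deduce ${\rm t}(R)\leq 1$. Thus ${\rm t}(R)=1$, i.e., $R$ is Gorenstein.

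I do not foresee any serious obstacle: the argument reduces to a single generators-versus-multiplicity comparison, with Lemma \ref{L-M} carrying out the maximal Cohen-Macaulay step and generic Gorensteinness inserted precisely to equip $\omega_R$ with a rank. The only subtlety worth checking is that the canonical module is at our disposal, which is granted by the running assumption of Section \ref{aux}.
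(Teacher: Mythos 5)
Your proof is correct, and it takes a genuinely different -- and in fact more economical -- route than the paper's. The paper invokes Theorem \ref{teo2.ulrich.tensor.products}(i) with $I=\mathfrak{m}$ to conclude that $M\otimes_R\omega_R$ is \emph{Ulrich}, which makes $\nu(M\otimes_R\omega_R)=\mathrm{e}(M\otimes_R\omega_R)$ an equality, and then compares this with the rank-computed multiplicity to force ${\rm t}(R)=1$; that theorem in turn relies on Lemma \ref{main-lemma}, Lemma \ref{ulr}, and Foxby's result. You observe that full Ulrichness of $M\otimes_R\omega_R$ is overkill: since the desired conclusion ${\rm t}(R)\le 1$ only needs a one-sided estimate, it suffices to know $M\otimes_R\omega_R$ is maximal Cohen-Macaulay, which follows from the Auslander--Bridger formula plus Lemma \ref{L-M} applied with $N=R$ (valid since $\mathrm{G\text{-}dim}_RM=0$ gives $\mathrm{Ext}^i_R(M,R)=0$ for all $i\ge 1$), and then the Brennan--Herzog--Ulrich inequality $\nu\le\mathrm{e}$ for MCM modules, together with $\nu(M\otimes_R\omega_R)=\nu(M)\,{\rm t}(R)$ and $\mathrm{e}(M\otimes_R\omega_R)={\rm rk}_RM\cdot\mathrm{e}(R)=\nu(M)$, immediately yields ${\rm t}(R)\le 1$. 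The net effect is that your argument bypasses the generalized-Ulrich machinery of Section \ref{aux} entirely, using only the preliminaries of sections \ref{conv+basics} and \ref{prep}, at the small cost of the explicit Auslander--Bridger step that the paper absorbs inside the proof of Theorem \ref{teo2.ulrich.tensor.products}. Both proofs use generic Gorensteinness for exactly the same purpose: to give $\omega_R$ rank one.
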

\begin{proof} First, we can assume that $R$ possesses a canonical module $\omega_R$ (we pass to the ${\mathfrak m}$-adic completion of $R$ if necessary).
Now, applying Theorem \ref{teo2.ulrich.tensor.products}(i) with $I ={\mathfrak m }$, we obtain that $M \otimes_R \omega_R$ is Ulrich. Then \begin{equation}\label{equals}{\rm e}(M\otimes_R\omega_R)=\nu(M\otimes_R\omega_R)=\nu(M)\nu(\omega_R)={\rm rk}_RM\, {\rm e}(R){\rm t}(R).\end{equation} On the other hand, ${\rm rk}_R\omega_R$ exists and is equal to 1 because $R$ is generically Gorenstein (see \cite[Proposition 3.3.18(a)]{CMr}). Thus, ${\rm rk}_R(M\otimes_R\omega_R)={\rm rk}_RM$ and consequently ${\rm e}(M\otimes_R\omega_R)={\rm rk}_RM\,{\rm e}(R)$. Comparing with (\ref{equals}), it follows that ${\rm t}(R)=1$, i.e., $R$ is Gorenstein.\qed   
\end{proof}

\begin{corollary}\label{cor2.ulrich.tensor.products2_domain} Suppose $R$ is a domain. If there exists an Ulrich $R$-module $M$ with $\mbox{\rm G-dim}_RM < \infty$, then $R$ is Gorenstein.
\end{corollary}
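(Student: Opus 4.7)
The plan is to reduce this statement directly to the preceding Corollary \ref{cor2.ulrich.tensor.products2}. Two hypotheses of that corollary need to be verified under the weaker assumption that $R$ is a domain: (a) $R$ is generically Gorenstein, and (b) the given Ulrich module $M$ has a rank.

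For (a), a domain $R$ has a unique minimal prime, namely $(0)$, and the localization $R_{(0)}$ is the field of fractions of $R$, which is trivially Gorenstein. Hence $R$ is generically Gorenstein. For (b), I would recall from the preliminaries in Section \ref{conv+basics} that, when $R$ is a domain, every finite $R$-module automatically has a (generic, constant) rank; in particular $M$ has a rank. Since $M$ is Ulrich it is maximal Cohen-Macaulay, hence nonzero, so ${\rm rk}_R M \geq 1$, which is all that is required.

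With these two points in hand, all hypotheses of Corollary \ref{cor2.ulrich.tensor.products2} are satisfied: $R$ is generically Gorenstein, $M$ is an Ulrich $R$-module with rank, and $\mbox{G-dim}_R M < \infty$. Applying that corollary yields immediately that $R$ is Gorenstein, as desired. I do not foresee any obstacle here; the only thing to be careful about is citing the correct facts from the preliminaries (existence of rank over a domain, and the observation that a domain is generically Gorenstein), so the argument reads as a clean, short deduction rather than a repetition of the proof of Corollary \ref{cor2.ulrich.tensor.products2}.
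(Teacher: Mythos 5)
Your proof is correct and matches the paper's intended reasoning: the paper gives no separate proof for this corollary precisely because it follows immediately from Corollary \ref{cor2.ulrich.tensor.products2}, using that a domain is reduced (hence generically Gorenstein) and that every finite module over a domain has a rank, both facts already recorded in the paper's preliminaries.
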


\begin{example}\rm Here we want to illustrate that the above corollaries
can be used to detect infinite Gorenstein dimension. Consider the ideal
 $$P=(xt-ys, xu-zs, yu-zt) \subset S=K[x, y, z, s, t, u]_{(x, y, z, s, t, u)}$$ where $x, y, z, s, t, u$ are 6 indeterminates over a field $K$ of characteristic zero. The  ring $R=S/P$ is a  Cohen-Macaulay local domain of dimension 4 and multiplicity 3. Now let ${\rm Der}_K(R)$ be the $R$-module formed by the $K$-derivations of $R$, i.e., the elements of ${\rm End}_KR$ that satisfy Leibniz rule. Note ${\rm rk}_R{\rm Der}_K(R)=4$. Since $P$ is the ideal generated by the maximal minors of a $2\times 3$ generic matrix, we can apply \cite[Theorem 15.7]{BV} to guarantee that ${\rm Der}_K(R)$ is maximal Cohen-Macaulay. With the aid of \cite{Mac}, we can compute $\nu({\rm Der}_K(R))=12$.
It follows that ${\rm Der}_K(R)$ is Ulrich. Since $R$ is non-Gorenstein, Corollary \ref{cor2.ulrich.tensor.products2_domain} gives $$\mbox{\rm G-dim}_R{\rm Der}_K(R) = \infty.$$ In particular, ${\rm Der}_K(R)$ has infinite projective dimension over $R$.
\end{example}

\section{Applications}\label{appli}

In this last section we provide several applications of some of our results. Especially, we will confirm two famous homological conjectures for the class of Ulrich modules, but first we investigate other topics such as multiplicity of tensor products and freeness criteria for modules, a characterization of complete intersection ideals of codimension three, and an Ulrich-based approach to the long-standing Berger's conjecture about differential modules.

\subsection{Multiplicity and freeness criteria}\label{appli1}
Finding formulas for the multiplicity of a finite module $M$ over a local ring $R$ is a classical issue in commutative algebra and related fields. As recalled in Section \ref{conv+basics}, if  $M$ has a rank then it is easy to see that $\textrm{e}(M)={\rm rk}_RM\, {\rm e}(R)$, but formulas for ${\rm e}(M)$ for general $M$ are desirable as well. Our first observations treat the case of tensor products of modules that may not possess a rank. Curiously we will also see that, in the case where both modules
have a rank, our results give in fact freeness criteria.

\begin{corollary}\label{app-free} Let  $R$ be a local ring of dimension $d\geq 1$, and let $M, N$ be finite $R$-modules. Suppose any one of the following sets of conditions:
    \begin{itemize}
\item[(i)] $R$ is Cohen-Macaulay possessing a canonical module, $M$ is maximal Cohen-Macaulay, $N$ is Ulrich, and  $\textrm{\emph{Ext}}_{R}^i(M,
N^{\dag})=0$ for all $i=1, \ldots, d$;
    \item[(ii)] $R$ is Gorenstein, $M$ and $N$ are maximal Cohen-Macaulay, $N$ {\rm (}resp.\,$M${\rm )} is Ulrich, and  $\textrm{\emph{Ext}}_{R}^i({\rm Tr}\,M, N)=0$ for all $i=1, \ldots, d+1$.
\end{itemize} Then ${\rm e}(M\otimes_RN)=\nu(M)\nu(N)$. If in addition $M$ and $N$ have a rank, and if in situation {\rm (i)} the $R$-module $M$ is reflexive,  then $M$  {\rm (}resp.\,$N${\rm )} is  free.
\end{corollary}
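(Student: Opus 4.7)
The plan is to deduce everything from the assertion that $M\otimes_R N$ is an Ulrich $R$-module, which is furnished directly by the previously established corollaries. In situation~(i), the hypotheses match those of Corollary~\ref{cor3.ulrich.tensor.products}, and in situation~(ii) they match those of Corollary~\ref{cor3.ulrich.tensor.products3}; hence in either case $M\otimes_R N$ is Ulrich. Combining the defining identity ${\rm e}(M\otimes_R N)=\nu(M\otimes_R N)$ with the natural isomorphism $(M\otimes_R N)/\mathfrak{m}(M\otimes_R N)\cong M/\mathfrak{m}M \otimes_{R/\mathfrak{m}} N/\mathfrak{m}N$, from which $\nu(M\otimes_R N)=\nu(M)\,\nu(N)$ is immediate, yields the multiplicity formula.

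For the freeness statement, I would next use the hypothesis that $M$ and $N$ both admit a rank. Then $M\otimes_R N$ also has rank, equal to ${\rm rk}_R M\cdot{\rm rk}_R N$ (by tensoring with the total quotient ring $Q$ of $R$), and the formula ${\rm e}(L)={\rm rk}_R L\cdot{\rm e}(R)$ recalled in Section~\ref{conv+basics} lets us rewrite the previous display as
$$
\nu(M)\,\nu(N)\;=\;{\rm rk}_R M\cdot{\rm rk}_R N\cdot{\rm e}(R).
$$
In situation~(i), $N$ is Ulrich with rank, so $\nu(N)={\rm e}(N)={\rm rk}_R N\cdot{\rm e}(R)\neq 0$; dividing gives $\nu(M)={\rm rk}_R M$. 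The symmetric computation in situation~(ii), where $M$ now plays the Ulrich role, produces $\nu(N)={\rm rk}_R N$.

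Finally, the equality $\nu(L)=r:={\rm rk}_R L$ is converted into freeness via a standard syzygy argument: pick a minimal surjection $\phi\colon R^{r}\twoheadrightarrow L$, and localize at the set of non-zerodivisors of $R$ to see that $\phi\otimes_R Q$ is a surjection of free $Q$-modules of equal rank, hence an isomorphism; consequently $\ker\phi\otimes_R Q=0$. Every element of $\ker\phi\subseteq R^{r}$ is thus annihilated by a non-zerodivisor of $R$, and since $R^{r}$ itself is torsion-free this forces $\ker\phi=0$, so that $L\cong R^{r}$. Applying this with $L=M$ in situation~(i) and with $L=N$ in situation~(ii) completes the proof. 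The only real care is in situation~(i), where the reflexivity hypothesis on $M$ is consistent with the above (reflexive modules are in particular torsion-free), while in situation~(ii) no analogous hypothesis is needed because maximal Cohen-Macaulay modules over the Gorenstein ring $R$ are automatically totally reflexive. I do not anticipate any substantial obstacle beyond carefully tracking which factor is the Ulrich one in each case.
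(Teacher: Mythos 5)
Your argument is correct, and it coincides with the paper's proof up until the very last step. Both proofs first obtain that $M\otimes_R N$ is Ulrich (you via the $I=\mathfrak m$ specializations Corollary~\ref{cor3.ulrich.tensor.products} and Corollary~\ref{cor3.ulrich.tensor.products3}, the paper directly from Theorems~\ref{teo.ulrich.tensor.products} and \ref{teo3.ulrich.tensor.products} together with Lemma~\ref{ulr}), read off ${\rm e}(M\otimes_RN)=\nu(M\otimes_RN)=\nu(M)\nu(N)$, and, once ranks enter, reduce to the equality $\nu(L)={\rm rk}_RL$ for whichever factor $L$ is not assumed Ulrich. You diverge on the passage from $\nu(L)={\rm rk}_RL$ to freeness. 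The paper takes the kernel $Z$ in $0\to Z\to R^r\to L\to 0$, notes that ${\rm rk}_RZ=0$ forces $Z^*=0$, dualizes to get $L^*\cong R^r$, and only then invokes reflexivity of $L$ (assumed in situation (i), automatic in (ii)) to conclude $L\cong L^{**}\cong R^r$. You instead tensor with the total quotient ring $Q$, observe that the surjection $Q^r\twoheadrightarrow L\otimes_RQ\cong Q^r$ of finitely generated $Q$-modules is an isomorphism (a surjective endomorphism of a finitely generated module over a commutative ring is injective), deduce $Z\otimes_RQ=0$, and then use that $Z$ sits inside the torsion-free module $R^r$ to conclude $Z=0$ outright. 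This is valid and in fact sharper: it shows $Z=0$ directly, never touching reflexivity. One small caveat: your closing remark that the reflexivity of $M$ ``is consistent with the above'' slightly misdescribes your own argument --- the torsion-freeness you actually use is that of $R^r$, which holds automatically, so the reflexivity hypothesis in (i) is simply idle in your proof (whereas the paper's dual-then-double-dual argument genuinely depends on it). It would also be a bit tidier to spell out that case (ii) contains two symmetric subcases ($N$ Ulrich gives $M$ free, $M$ Ulrich gives $N$ free), both handled by the identical computation.
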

\begin{proof} Assuming (i) (resp.\,(ii)) and applying Theorem \ref{teo.ulrich.tensor.products} (resp.\,Theorem \ref{teo3.ulrich.tensor.products} and Lemma \ref{ulr}) with $I={\mathfrak m}$, we obtain that $M\otimes_RN$ is Ulrich. Hence, ${\rm e}(M\otimes_RN)=\nu(M\otimes_RN)=\nu(M)\nu(N)$, as asserted. Now, suppose $M$ and $N$ have a rank, and for simplicity write ${\rm rk}_RM=r$, ${\rm rk}_RN=s$, and ${\rm e}(R)=e$. In this case, $M\otimes_RN$ has a rank as well, equal to $rs$. Thus, 
${\rm e}(M\otimes_RN)=rse$, which gives \begin{equation}\label{equal}\nu(M)\nu(N)=rse.\end{equation}
Note that if $N$ is Ulrich then  $\nu(N)=se$, which yields $\nu(M)se=rse$ and hence $\nu(M)=r$, so that there is a short exact sequence $0\rightarrow Z\rightarrow R^{r} \rightarrow
M \rightarrow 0$. Notice that ${\rm rk}_RZ=0$, or equivalently, $Z^*=0$. By dualizing this exact sequence,  we get $M^*\cong R^r$, and clearly if (i) holds and $M$ is reflexive then $M$ is necessarily free. Now assume (ii) holds with $N$ Ulrich. The module $M$ must be reflexive because it is maximal Cohen-Macaulay and $R$ is Gorenstein, and then again we get that $M$ is free. Finally, if $M$ is Ulrich in (ii), then by (\ref {equal}) we obtain $\nu(N)=s$. Using the same argument as above, we conclude that $N$ is free. \qed
\end{proof}

\subsection{Complete intersections}\label{appli2} In this subsection we show that Urich modules can be employed to characterize codimension 3 complete intersection ideals. Recall that a proper ideal $J$ of a regular local ring $S$ is said to be {\it generically a complete intersection} if,  for each minimal prime ${\mathfrak{q}}$ of $S$, the ideal $J_{\mathfrak{q}}$ is generated by an  $S_{\mathfrak{q}}$-sequence; note this holds if, e.g., $J$ is radical. The result is as follows.

\begin{corollary}\label{codim3}  Let $R=S/J$, where $(S, {\mathfrak n})$ is a regular local ring of dimension $\delta \geq 4$, and $J$ is a Gorenstein $S$-ideal of height $3$ which is generically a complete intersection. If there exists an Ulrich $R$-module $N$ having a rank and satisfying  $$\textrm{\emph{Ext}}_{R}^i(J/ J^2, N)=0 \quad \mbox{for\, all} \quad i=1, \ldots, \delta -2,$$ then $J=(f, g, h)$ for some $S$-sequence $\{f, g, h\}\subset \mathfrak{n}$.
\end{corollary}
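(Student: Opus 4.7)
The strategy is to show that the conormal module $L := J/J^{2}$ is free of rank $3$ as an $R$-module. Once this is established, lifting an $R/\mathfrak{m}$-basis of $L/\mathfrak{m}L$ via Nakayama's lemma produces three elements $f, g, h \in J$ that generate $J$; since $J$ has height $3$ in the regular local ring $S$, they automatically form an $S$-regular sequence, so $J = (f, g, h)$ is a complete intersection.

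For the setup, since $J$ is a Gorenstein ideal of $S$, $R = S/J$ is a Gorenstein local ring of dimension $d = \delta - 3 \geq 1$. Being a Gorenstein (hence perfect and unmixed) ideal of height $3$, every minimal prime $\mathfrak{q}$ of $J$ in $S$ has height $3$, so $S_{\mathfrak{q}}$ is regular local of dimension $3$; the generically complete intersection hypothesis then gives $J_{\mathfrak{q}}/J_{\mathfrak{q}}^{\,2} \cong (S_{\mathfrak{q}}/J_{\mathfrak{q}})^{3} \cong R_{\mathfrak{q}/J}^{3}$, so $L$ has a well-defined $R$-rank equal to $3$.

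A crucial intermediate step is that $L$ is maximal Cohen-Macaulay. By the Buchsbaum-Eisenbud structure theorem, the minimal $S$-free resolution of $R$ has the self-dual form $0 \to S \to S^{m} \xrightarrow{\phi} S^{m} \to S \to R \to 0$ with $m = \mu(J)$ odd and $\phi$ skew-symmetric. Tensoring with $R$ and exploiting self-duality gives $L \cong \textrm{Tor}_{1}^{S}(R, R) \cong \textrm{Tor}_{2}^{S}(R, R)$, which produces the two short exact sequences $0 \to L \to R^{m} \to \textrm{im}\,\bar\phi \to 0$ and $0 \to \textrm{im}\,\bar\phi \to R^{m} \to L \to 0$, where $\bar\phi$ denotes $\phi$ modulo $J$. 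A depth chase on these sequences (using that $R^{m}$ is MCM together with the depth lemma) forces both $L$ and $\textrm{im}\,\bar\phi$ to have depth $d$, i.e., to be MCM; over the Gorenstein ring $R$, $\textrm{Tr}\,L$ is then MCM as well.

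The proof is completed by applying Corollary~\ref{app-free}(ii) to $M := \textrm{Tr}\,L$ and $N$ the given Ulrich module. Over the Gorenstein ring $R$, the operator $\textrm{Tr}^{2}$ acts as the identity on MCM modules up to free summands, so the hypothesis $\textrm{Ext}_{R}^{i}(L, N) = 0$ for $i = 1, \ldots, \delta - 2$ is precisely the Ext vanishing $\textrm{Ext}_{R}^{i}(\textrm{Tr}\,M, N) = 0$ for $i = 1, \ldots, d+1$ required by the corollary. Both $M$ and $N$ are MCM with a rank, and $N$ is Ulrich, so the corollary concludes that $M = \textrm{Tr}\,L$ is $R$-free. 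It follows that $L$ is stably free, and since $R$ is local, $L$ is free; the rank constraint then gives $L \cong R^{3}$, as desired. The delicate point is the depth argument establishing $L$ MCM; once in place, the rest is a clean invocation of the machinery developed earlier in the paper.
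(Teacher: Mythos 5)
Your overall strategy coincides with the paper's: show that $L = J/J^2$ is a free $R$-module by feeding it into Corollary~\ref{app-free}(ii), then conclude via the classical fact that a free conormal module forces $J$ to be a complete intersection. The final lifting step and the rank computation ${\rm rk}_R L = 3$ are both fine. There are, however, two deviations worth flagging, one of which is a genuine gap.

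The gap is in your argument that $L$ is maximal Cohen-Macaulay. The paper simply cites Huneke--Ulrich for this; you instead assert $L \cong \textrm{Tor}_1^S(R,R) \cong \textrm{Tor}_2^S(R,R)$ and run a depth chase. But the second isomorphism does not follow formally from self-duality of the Buchsbaum--Eisenbud resolution. Tensoring the resolution with $R$ gives $0 \to R \xrightarrow{0} R^m \xrightarrow{\bar\phi} R^m \xrightarrow{0} R \to 0$ (the Pfaffian-vector maps have entries in $J$, hence vanish mod $J$), so $\textrm{Tor}_1 = \textrm{coker}\,\bar\phi = L$ while $\textrm{Tor}_2 = \ker\,\bar\phi = \ker\,\bar\phi^{\,t} = L^*$. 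Thus your claim is precisely the self-duality $L \cong L^*$, which is a substantive statement about the conormal module that would need its own proof or reference; it is not what the self-duality of the free resolution gives you (that yields a relation between $\textrm{Tor}_i$ and a \emph{dual} of $\textrm{Tor}_{3-i}$, modulo correction terms, not an isomorphism $\textrm{Tor}_1 \cong \textrm{Tor}_2$). Your depth chase only closes if the two outer modules in your four-term exact sequence are the same; with $L$ on one end and $L^*$ on the other, it does not. The clean fix is to replace this paragraph with the citation to \cite{Hu-U}, as the paper does.

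Your second deviation is harmless and in fact a nice alternative. Where the paper uses the alternating presentation $R^m \xrightarrow{\bar\phi} R^m \to L \to 0$ to get $\textrm{Tr}\,L \cong L$ on the nose (so that the Ext-vanishing hypothesis is already $\textrm{Ext}^i_R(\textrm{Tr}\,L, N)=0$ and Corollary~\ref{app-free}(ii) applies directly to $M = L$), you set $M = \textrm{Tr}\,L$ and invoke the stable identity $\textrm{Tr}^2 \cong \textrm{id}$ up to free summands. This works: $\textrm{Tr}\,L$ is MCM because $L$ is MCM over a Gorenstein ring, it has a rank (equal to ${\rm rk}\,L$, since the presentation is square), and freeness of $\textrm{Tr}\,L$ does force $L$ stably free hence free. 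But it is a longer route than the paper's, which exploits the alternating structure you already had in hand. In short: fix the MCM step by citing the known result, and the rest goes through.
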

\begin{proof} In this case,  the conormal module $J/J^2$ is maximal Cohen-Macaulay (see \cite{Hu-U}) and has a rank. Moreover, by the Buchsbaum-Eisenbud structure theorem (see \cite{B-E}), the ideal $J$ admits an $S$-free presentation of the form $S^{\nu}\rightarrow S^{\nu}\rightarrow
J \rightarrow 0$ defined by an alternating map $\mathfrak{A} \in {\rm End}_SS^{\nu}$. Then, the conormal module $J/J^2=J\otimes_SR$ has an $R$-free presentation $$R^{\nu}\stackrel{\overline{\mathfrak{A}}}{\longrightarrow} R^{\nu} \longrightarrow
J/J^2 \longrightarrow 0, \quad \overline{\mathfrak{A}}=\mathfrak{A}\otimes {\rm Id}_R.$$ Now set ${\rm Hom}_R(\overline{\mathfrak{A}}, R)=\overline{\mathfrak{A}}^*$. Since $\mathfrak{A}$ is alternating, ${\rm im}\, \overline{\mathfrak{A}}^*= {\rm im}\, \overline{\mathfrak{A}}$ and therefore we can write ${\rm Tr}\,J/J^2={\rm coker}\, \overline{\mathfrak{\mathfrak{A}}}^*= {\rm coker}\, \overline{\mathfrak{A}}=J/J^2$. Also notice that ${\rm dim}\,R=\delta -3\geq 1$. By Corollary \ref{app-free}(ii), the $R$-module $J/J^2$ must be free, which as is well-known is equivalent to $J$ being generated by a regular sequence (see \cite{V}). \qed \end{proof}

\medskip

Concerning this corollary, we may wonder whether, more generally, the vanishing of  $\textrm{{Ext}}_{R}^i(J/ J^2, N)$ in the range $i=t, \ldots, \delta -2$, where $t$ is an integer with $1\leq t\leq \delta -2$, suffices to guarantee that ${\rm pd}_RJ/J^2<t$, where ${\rm pd}_R$ denotes projective dimension over $R$. However, it is now known that the condition ${\rm pd}_RJ/J^2<\infty$
is equivalent to $J$ being generated by an $S$-sequence (see \cite[Theorem A]{B}). So this issue is the same as the following question.

\begin{question}\rm In the setting of Corollary \ref{codim3}, is it true that the single vanishing condition  
$$\textrm{{Ext}}_{R}^{\delta -2}(J/ J^2, N)=0$$ suffices for $J$ to be generated by an $S$-sequence?
\end{question}

\subsection{Berger's conjecture}\label{appli3} In this part, we fix a field $K$ of characteristic zero and consider local $K$-algebras $R$ admitting a universally finite differential module $\Omega_{R/K}$ (for the theory, see \cite{K}, also \cite{Herzog}). This is the case if, for instance, $(R, \mathfrak{m})$ is either 
\begin{equation}\label{two-classes}
{K}[\underline{x}]_{\mathfrak p}/I \ \ \ ({\mathfrak p}\in {\rm Spec}\,{K}[\underline{x}])\quad \mbox{or} \quad {K}[\![\underline{x}]\!]/I,\end{equation} where $I$ is a proper ideal and $\underline{x}=x_1, \ldots, x_m$ is a list of $m$ indeterminates over ${K}$. Recall $\Omega_{R/K}$ is a finite $R$-module, which in the first situation is just the module of K\"ahler differentials of $R$ over ${K}$.



Now let us denote by $\theta \in {\rm Hom}_R(R^m, R^{\nu})$ the map defined by the Jacobian matrix of some (any) generating set of $I$, with entries taken modulo $I$. It is well-known that ${\rm Tr}\,\Omega_{R/K}={\rm Coker}\,\theta$. We consider the long-standing {\it Berger's conjecture}, raised over 60 years ago in \cite{Ber}; see also the surveys \cite{BeSur}, \cite{Herzog}.

\bigskip

\noindent {\bf Conjecture (Berger)}\label{Berger}
Let $(R, \mathfrak{m})$ be as in $($\ref{two-classes}$)$. Suppose $R$ is reduced and one-dimensional. If $\Omega_{R/K}$ is torsionfree, then $R$ is regular.

 \bigskip

This problem remains open even if $R$ is Gorenstein. Our result is as follows.

\begin{corollary}\label{app-Berger} Berger's conjecture is true in any one of the following situations:
\begin{itemize}
\item[(i)] There exists an Ulrich $R$-module $N$ with rank, such that $${\rm Ext}_R^1(\Omega_{R/K}, N^{\dag})=0;$$

\item[(ii)] $R$ is Gorenstein and there exists an Ulrich $R$-module $N$ with rank, such that $${\rm Ext}_R^1({\rm Coker}\,\theta, N)={\rm Ext}_R^2({\rm Coker}\,\theta, N)=0;$$

\item[(iii)] $R$ is Gorenstein and there exists an Ulrich $R$-module $M$ with rank, such that $${\rm Ext}_R^1({\rm Tr}\,M, \Omega_{R/K})={\rm Ext}_R^2({\rm Tr}\,M, \Omega_{R/K})=0.$$

\end{itemize}
\end{corollary}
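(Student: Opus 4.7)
All three parts reduce to showing that $\Omega_{R/K}$ is a free $R$-module: since $K$ has characteristic zero and $R$ is as in (\ref{two-classes}), the classical Scheja--Storch-type theorem asserts that freeness of the universally finite differential module is equivalent to regularity of $R$. Note that because $R$ belongs to one of the two classes in (\ref{two-classes}), a canonical module $\omega_R$ exists; because $R$ is reduced and one-dimensional, it is Cohen--Macaulay, the module $\Omega_{R/K}$ is maximal Cohen--Macaulay (torsion-free coincides with MCM in dimension one), and $\Omega_{R/K}$ has a well-defined rank $r = 1$: at each minimal prime the localization is a finite separable field extension of $K$ of transcendence degree one, since $K$ has characteristic zero.

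Cases (ii) and (iii) are then immediate from Corollary \ref{app-free}(ii) specialized to $d = 1$. In (ii), take $M = \Omega_{R/K}$ and $N$ the given Ulrich module: since $\textrm{Tr}\,\Omega_{R/K} = \textrm{Coker}\,\theta$, the assumed vanishings of $\textrm{Ext}^1_R(\textrm{Coker}\,\theta, N)$ and $\textrm{Ext}^2_R(\textrm{Coker}\,\theta, N)$ translate to $\textrm{Ext}^i_R(\textrm{Tr}\,M, N) = 0$ for $i = 1, \ldots, d+1$, and the corollary yields directly that $M = \Omega_{R/K}$ is free. In (iii), take $M$ the given Ulrich module and $N = \Omega_{R/K}$; the Ext-vanishings are those hypothesized, and the conclusion of the same corollary applies to $N = \Omega_{R/K}$, which is therefore free.

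Case (i) proceeds analogously via Corollary \ref{cor3.ulrich.tensor.products} (or Corollary \ref{app-free}(i)), applied with $M = \Omega_{R/K}$ and $N$ the given Ulrich module: when $d = 1$ the single vanishing $\textrm{Ext}^1_R(\Omega_{R/K}, N^{\dag}) = 0$ is exactly the needed hypothesis, so $\Omega_{R/K} \otimes_R N$ is Ulrich. Comparing the identity $\textrm{e}(\Omega_{R/K} \otimes_R N) = \nu(\Omega_{R/K})\nu(N)$ (from Ulrichness) with the rank formulas $\textrm{e}(\Omega_{R/K} \otimes_R N) = r \cdot {\rm rk}_R N \cdot \textrm{e}(R)$ and $\nu(N) = {\rm rk}_R N \cdot \textrm{e}(R)$ then forces $\nu(\Omega_{R/K}) = r = 1$, giving a short exact sequence $0 \to Z \to R \to \Omega_{R/K} \to 0$ with $Z$ of rank zero. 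Since $R$ is Cohen--Macaulay, no nonzero element of $R$ is annihilated by a non-zero-divisor, so the rank-zero submodule $Z$ vanishes and $\Omega_{R/K} \cong R$.

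The delicate point---and the obstacle I anticipate---is precisely this freeness conclusion in case (i): Corollary \ref{app-free}(i) as stated involves a reflexivity assumption not a priori available in the non-Gorenstein setting, and one has to substitute the one-dimensional torsion-freeness observation above (rank-zero submodules of a free module vanish over any Cohen--Macaulay base). Once $\Omega_{R/K}$ is free in all three cases, regularity of $R$ follows from the classical theorem that, in characteristic zero, a local $K$-algebra of the form (\ref{two-classes}) is regular as soon as its universally finite differential module is $R$-free.
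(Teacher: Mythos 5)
Your proposal is correct, but the route through freeness of $\Omega_{R/K}$ is slightly different from---and slightly more demanding than---the paper's.

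The paper observes that the \emph{proof} of Corollary \ref{app-free} (not its conclusion) already produces, in all three cases, the numerical identity $\nu(\Omega_{R/K}) = {\rm rk}_R\,\Omega_{R/K} = 1$, i.e.\ that $\Omega_{R/K}$ is cyclic. It then quotes Kunz's formula $\nu(\Omega_{R/K}) = \nu(\mathfrak{m})$ (\cite[Cor.\ 6.5(b) and Cor.\ 13.15]{K}), and cyclicity alone gives $\nu(\mathfrak{m}) \le 1$, hence regularity. This deliberately stops short of showing $\Omega_{R/K}$ is free, so the reflexivity hypothesis in Corollary \ref{app-free}(i) never needs to be addressed.

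You instead push through to freeness of $\Omega_{R/K}$, which requires the extra observation you correctly supply in case (i): from $\nu(\Omega_{R/K})=1$ one gets $0\to Z\to R\to\Omega_{R/K}\to 0$ with ${\rm rk}_R Z = 0$, and a rank-zero submodule of $R$ over a reduced ring is annihilated elementwise by non-zero-divisors, so $Z=0$ and $\Omega_{R/K}\cong R$. That is a valid patch of the reflexivity gap. You then invoke a regularity criterion via freeness of the universally finite differential module in characteristic zero, which again is a correct ending but is a heavier-weight result than Kunz's numerical formula, and whose precise form (especially in the complete case $K[\![\underline x]\!]/I$, and when the residue field of $R$ need not be algebraic over $K$) needs a specific citation rather than the informal attribution to Scheja--Storch. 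The paper's route---cyclicity plus $\nu(\Omega_{R/K})=\nu(\mathfrak m)$---entirely sidesteps both the reflexivity issue and this concern, so it is the cleaner path, but your argument reaches the same conclusion and the workaround in case (i) is sound.
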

\begin{proof} First note that, under the hypotheses of the conjecture, $R$ is Cohen-Macaulay and $\Omega_{R/K}$ is a maximal Cohen-Macaulay $R$-module with rank (equal to ${\rm dim}\,R$). Now, in any of the three cases, the proof of Corollary \ref{app-free} shows that $\nu(\Omega_{R/K})={\rm rk}_R\Omega_{R/K}$ and consequently $\Omega_{R/K}$ must be cyclic. On the other hand, in the situation $R=K [\underline{x}]_{\mathfrak p}/I$ (resp. $R={K}[\![\underline{x}]\!]/I$), we can use \cite[Corollary 6.5(b)]{K} (resp. \cite[Corollary 13.15]{K}) to obtain $$\nu(\Omega_{R/K})=\nu(\mathfrak{m}).$$ It follows that $\mathfrak{m}$ is a principal ideal, i.e., $R$ is regular. \qed \end{proof}

\subsection{Regularity, and two homological conjectures}\label{appli4} Our goal in this final part is to confirm, for the class of Ulrich modules, two famous conjectures concerning the effects of Ext vanishing. 
We begin providing a central result  which is, in essence, a byproduct of Corollary \ref{cor4.ulrich.tensor.products} and additionally gives an Ulrich-based characterization of regular local rings. 


\begin{corollary}\label{cor5.ulrich.tensor.products}
Let $R$ be a Cohen-Macaulay local domain of dimension $d\geq 1$. Let $M$ be an Ulrich $R$-module. The following conditions are equivalent:
\begin{itemize}
\item[(i)] ${\rm Ext}_{R}^i(M, M)=0$ for all $i\geq 1$;

\item[(ii)] ${\rm Ext}_{R}^i(M, M)=0$ for all $i=1, \ldots, d$;
             \item[(iii)] $R$ is regular and $M$ is free;
             \item[(iv)]  $R$ is regular.
\end{itemize}
\end{corollary}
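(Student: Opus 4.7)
The implications (iii) $\Rightarrow$ (i) $\Rightarrow$ (ii) and (iii) $\Rightarrow$ (iv) are immediate, and (iv) $\Rightarrow$ (iii) follows from the Auslander-Buchsbaum formula applied to the maximal Cohen-Macaulay module $M$ over the regular local ring $R$. The substantive content is thus (ii) $\Rightarrow$ (iv). The plan is to use Corollary \ref{cor4.ulrich.tensor.products} to manufacture the Ulrich module $M\otimes_RM^{\dag}$ and then extract a numerical identity that forces $\textrm{e}(R)=1$.

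As a first step, I would reduce to the case in which $R$ has infinite residue field and admits a canonical module $\omega_R$, via the faithfully flat extensions $R\to R(x)=R[x]_{\mathfrak{m}R[x]}\to\widehat{R(x)}$, mirroring the maneuver in the proof of Corollary \ref{cor2.ulrich.tensor.products2}. These extensions preserve Cohen-Macaulayness, Ulrichness of $M$, the Ext-vanishing hypothesis, the multiplicity $\textrm{e}(R)$, the rank $r:={\rm rk}_RM$, and the target property of being regular, and moreover Convention \ref{convention} then applies to $I=\mathfrak{m}$. Now, since $\mathfrak{m}$ is a Gorenstein ideal (its quotient is a field) and $M$ is Ulrich with respect to $\mathfrak{m}$ by Remark \ref{obs.I.urich.module}, Lemma \ref{ulr} gives that $M^{\dag}$ is Ulrich as well; and since $R$ is a Cohen-Macaulay local domain, hence generically Gorenstein, ${\rm rk}_R\omega_R=1$ by \cite[Proposition 3.3.18(a)]{CMr}, so ${\rm rk}_RM^{\dag}=r$ and therefore $\nu(M^{\dag})=\textrm{e}(M^{\dag})=r\,\textrm{e}(R)$, just as $\nu(M)=\textrm{e}(M)=r\,\textrm{e}(R)$.

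Invoking the Ext-vanishing hypothesis (ii), Corollary \ref{cor4.ulrich.tensor.products} ensures that $M\otimes_RM^{\dag}$ is Ulrich, and combining the Ulrich condition with the multiplicativity of both rank and number of generators under tensor products yields the chain
\[
r^{2}\,\textrm{e}(R)={\rm rk}_R(M\otimes_RM^{\dag})\,\textrm{e}(R)=\textrm{e}(M\otimes_RM^{\dag})=\nu(M\otimes_RM^{\dag})=\nu(M)\nu(M^{\dag})=r^{2}\,\textrm{e}(R)^{2}.
\]
Thus $\textrm{e}(R)=1$, and since $R$ is Cohen-Macaulay this forces $R$ to be regular by the classical multiplicity criterion, establishing (iv).

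The main anticipated obstacle is the preliminary reduction itself: $\widehat{R(x)}$ need not remain a domain, so the rank-theoretic computations above must be justified through the invariance of multiplicity and of rank at the generic point under the faithfully flat base changes rather than taken for granted inside the completion. This is precisely the tacit technicality that appears in the proof of Corollary \ref{cor2.ulrich.tensor.products2}, and the same workaround transfers to the present setting.
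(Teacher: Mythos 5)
Your argument follows the paper's proof in its essentials: invoke Corollary \ref{cor4.ulrich.tensor.products} to see that $M\otimes_RM^{\dag}$ is Ulrich, then compare $\textrm{e}(M\otimes_RM^{\dag})=r^{2}\,\textrm{e}(R)$ coming from rank with $\nu(M)\nu(M^{\dag})=r^{2}\,\textrm{e}(R)^{2}$ coming from Ulrichness of $M$ and $M^{\dag}$, forcing $\textrm{e}(R)=1$ and hence regularity of the unmixed local ring $R$. You are in fact more careful than the paper's own write-up about the reduction step, inserting the $R(x)$ extension to secure a minimal reduction of $\mathfrak{m}$ and flagging that the completion need not remain a domain, but the core chain of identities and the appeal to Corollary \ref{cor4.ulrich.tensor.products} are the same.
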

\begin{proof} First, by passing to the ${\mathfrak m}$-adic completion of $R$ if necessary, we can assume that $R$ possesses a canonical module $\omega_R$. The implication (i)$\Rightarrow$(ii) is clear. Now suppose (ii). According to Corollary \ref{cor4.ulrich.tensor.products}, the $R$-module $M \otimes_R M^{\dagger}$
is Ulrich. Therefore,  $\textrm{e}(M \otimes_R M^{\dagger})=\nu (M \otimes_R M^{\dagger})=\nu(M)\nu(M^{\dagger})$. Since $R$ is a domain, every finite $R$-module has a rank. As ${\rm rk}_R\omega_R=1$, it is easy to see that ${\rm rk}_RM={\rm rk}_RM^{\dagger}$, and let us denote this common number by $r$.  Clearly, $\textrm{rk}_R(M \otimes_R M^{\dagger})=r^2$, and write ${\rm e}(R)=e$. Thus,
 \begin{equation}\label{multi-tensor}\textrm{e}(M \otimes_R M^{\dagger}) =r^2e. \end{equation} On the other hand, by Lemma \ref{ulr}, the $R$-module $M^{\dagger}$ must be Ulrich because so is $M$. It follows that $\nu(M)$ and   $\nu(M^{\dagger})$ are both equal to $re$, and hence $\textrm{e}(M \otimes_R M^{\dagger})=r^2e^2$. Comparing with (\ref{multi-tensor}), we get $e=1$, and since the local ring $R$ is unmixed (being Cohen-Macaulay), it must be regular by \cite[Theorem 40.6]{Na}. Now, we obtain $\nu(M)=re=r$, which is equivalent (since $R$ is a local domain) to $M$ being free. This gives (iii), which obviously implies (iv). Finally, if (iv) holds, then $e=1$ and as before $\nu(M)=r$, i.e., $M$ is free, which implies (i). \qed
\end{proof}

\medskip

A finite $R$-module $C$ is said to be {\it semidualizing} if the map $R \rightarrow {\rm Hom}_R(C , C)$ given by homothety is an isomorphism and if, in addition, ${\rm Ext}^i_R(C , C ) = 0$ for all $i \geq 1$. For example, if $R$ is a Cohen-Macaulay local ring having a canonical module $\omega_R$, then $\omega_R$ is semidualizing. Also recall that if $R$ is a Gorenstein (e.g., regular) local ring then necessarily $C\cong R$. We just want to record the following consequence, which retrieves, in the case of domains of positive dimension, a result from \cite{MirandaQueirozSouza}. 

\begin{corollary}{\rm (\cite[Corollary 3.9]{MirandaQueirozSouza})} Let $R$ be a Cohen-Macaulay local domain of positive dimension. Let $C$ be a semidualizing $R$-module. Then, $R$ is regular if and only if $C$ is Ulrich.
\end{corollary}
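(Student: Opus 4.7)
The plan is to deduce this directly from Corollary \ref{cor5.ulrich.tensor.products}, which already encodes most of the work. The key observation is that a semidualizing module $C$ automatically satisfies the Ext-vanishing hypothesis of that corollary: by definition, $\mbox{\rm Ext}_R^i(C,C)=0$ for every $i\geq 1$, hence in particular for $i=1,\ldots,d$.

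For the ``if'' direction, I would assume $C$ is Ulrich. Since $C$ is in particular maximal Cohen-Macaulay and the semidualizing property gives condition (ii) of Corollary \ref{cor5.ulrich.tensor.products} for $M=C$, that corollary immediately yields that $R$ is regular (and, as a bonus, $C$ is free of rank one; this matches the well-known fact that any semidualizing module over a Gorenstein local ring is isomorphic to $R$).

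For the ``only if'' direction, I would invoke the standard fact that over a Gorenstein (in particular, regular) local ring the only semidualizing module, up to isomorphism, is $R$ itself (noted in the paragraph preceding the statement). Hence $C\cong R$, and it remains to verify that the ring $R$, viewed as a module over itself, is Ulrich when $R$ is regular. This is immediate: $\nu(R)=1$, $R$ is maximal Cohen-Macaulay over itself, and for a regular local ring one has $\mathrm{e}(R)=1$, so the equality $\nu(R)=\mathrm{e}(R)$ holds. Therefore $C$ is Ulrich.

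I do not anticipate any serious obstacle here; the corollary is essentially a packaging of Corollary \ref{cor5.ulrich.tensor.products} in the semidualizing language. The only subtle point worth verifying carefully is that the hypothesis ``positive dimension'' in the statement is exactly what allows us to apply Corollary \ref{cor5.ulrich.tensor.products}, whose standing assumption is $d\geq 1$; both directions of the proof rely on this via the reduction to Corollary \ref{cor4.ulrich.tensor.products}.
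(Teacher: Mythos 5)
Your proof is correct and follows exactly the route the paper intends: the paper gives no written proof, but the framing ("We just want to record the following consequence") together with the preceding paragraph (which recalls that a semidualizing module satisfies $\mathrm{Ext}^i_R(C,C)=0$ for all $i\geq 1$, and that over a Gorenstein local ring $C\cong R$) makes clear that the corollary is meant to be read off from Corollary \ref{cor5.ulrich.tensor.products} in precisely the way you describe. The only small inaccuracy is your closing remark that both directions "rely on" the positive-dimension hypothesis via Corollary \ref{cor5.ulrich.tensor.products}: in fact the "only if" direction ($R$ regular $\Rightarrow$ $C\cong R$ is Ulrich since $\nu(R)=\mathrm{e}(R)=1$) is elementary and does not invoke Corollary \ref{cor5.ulrich.tensor.products} at all, so it does not need $d\geq 1$; the dimension restriction is only needed for the "if" direction.
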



The first conjecture we invoke in this subsection is the following well-known problem in dimension one. It was originally proposed in \cite{HW} and later reformulated in \cite{HIW}.

\bigskip

\noindent {\bf Conjecture (Huneke-Wiegand)}\label{hiwconjecture}
Let $R$ be a one-dimensional Gorenstein local domain. If $M$ is a torsionfree finite $R$-module such that ${\rm Ext}^1_R(M, M)=0$, then $M$ is free.

 \bigskip

Note the clear connection between the Huneke-Wiegand conjecture and Berger's conjecture: if the former is true, then the latter also holds true (in case $R$ is a Gorenstein domain) if, in addition, ${\rm Ext}^1_R(\Omega_{R/K}, \Omega_{R/K})=0$. This fact is aligned, in nature, with Corollary \ref{app-Berger}.

Our contribution to the Huneke-Wiegand conjecture, which we now state, follows immediately from the case $d=1$ of Corollary \ref{cor5.ulrich.tensor.products}. 

\begin{corollary}\label{HW-Ulrich} The Huneke-Wiegand conjecture is true if
$M$ is an Ulrich $R$-module.
\end{corollary}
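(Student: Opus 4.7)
The plan is to deduce the statement directly from the $d=1$ case of Corollary \ref{cor5.ulrich.tensor.products}. First, I would observe that under the hypotheses of the Huneke-Wiegand conjecture, the ring $R$ is a one-dimensional Gorenstein (in particular, Cohen-Macaulay) local domain, matching the setup of Corollary \ref{cor5.ulrich.tensor.products} with $d=1$. Over such a one-dimensional Cohen-Macaulay ring, torsionfreeness of $M$ is equivalent to $M$ being maximal Cohen-Macaulay, a property which is already subsumed by the hypothesis that $M$ is Ulrich; so no further work is needed to verify the structural assumptions of the corollary.

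Next, I would observe that with $d=1$, condition (ii) of Corollary \ref{cor5.ulrich.tensor.products} reduces to the single vanishing ${\rm Ext}^1_R(M,M)=0$, which is exactly the Ext-vanishing hypothesis of the Huneke-Wiegand conjecture. Invoking the implication (ii) $\Rightarrow$ (iii) of that corollary immediately yields that $R$ is regular and $M$ is free. Since freeness of $M$ is precisely the conclusion of the Huneke-Wiegand conjecture, the proof is complete; in fact, one obtains a slightly stronger statement, namely that the Gorenstein hypothesis on $R$ can be relaxed to Cohen-Macaulay.

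There is essentially no real obstacle at this stage, as all the hard work has been absorbed into Corollary \ref{cor5.ulrich.tensor.products}. That corollary, in turn, uses Corollary \ref{cor4.ulrich.tensor.products} to deduce that $M \otimes_R M^{\dag}$ is again Ulrich and Lemma \ref{ulr} to guarantee that $M^{\dag}$ inherits the Ulrich property from $M$; the resulting multiplicity computation forces $e(R)=1$, whence regularity by \cite[Theorem 40.6]{Na}, and a rank count then yields freeness of $M$.
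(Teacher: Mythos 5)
Your proposal is correct and takes exactly the approach the paper uses: the statement follows immediately from the $d=1$ case of Corollary \ref{cor5.ulrich.tensor.products}, via the implication (ii) $\Rightarrow$ (iii). Your added observation that the Gorenstein hypothesis can be relaxed to Cohen-Macaulay is accurate and consistent with the hypotheses of that corollary.
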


\begin{example}\rm The above corollary is no longer true if we relax the condition of $R$ being a domain to that of being reduced, even if $R$ is an isolated complete intersection singularity. The simplest instance is when $R=K[\![x, y]\!]/(xy)$ (here, $x, y$ are formal indeterminates over a field $K$) and $M=R/xR$. Notice that ${\rm e}(R)=2$. By the short exact sequence 
$$0\longrightarrow xR\longrightarrow R \longrightarrow
yR \longrightarrow 0$$ it is easy to see that
 the $R$-module $M\cong yR$ is Ulrich and satisfies $${\rm Ext}^1_R(M, M)\cong 0:_M(0:_Rx)/xM\cong 0:_My=0,$$ while clearly $M$ cannot be free (it has a non-trivial annihilator).
\end{example}

\smallskip

To close the paper, we consider one of the most celebrated problems in homological commutative algebra, to wit, the following commutative local version of the conjecture from \cite[p.\,70]{AR}, which has been extensively explored in the literature but remains open even if $R$ is Gorenstein (among a number of other situations). We refer to \cite{HL}, \cite{Ku}, and their lists of references on the subject.

\bigskip

\noindent {\bf Conjecture (Auslander-Reiten)}\label{arconjecture}
Let $R$ be a local ring. If $M$ is a finite $R$-module such that ${\rm Ext}^i_R(M, R)={\rm Ext}^i_R(M, M)=0$ for all $i\geq 1$, then $M$ is free.
 
 \bigskip

Regarding this conjecture, we have  the following immediate byproduct of Corollary \ref{cor5.ulrich.tensor.products} (note we can assume $d\geq 1$, since a zero-dimensional integral domain is simply a field).

\begin{corollary}\label{AR-Ulrich} The Auslander-Reiten conjecture is true if $R$ is a Cohen-Macaulay local domain and $M$ is an Ulrich $R$-module.
\end{corollary}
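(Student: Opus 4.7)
The plan is to reduce this immediately to Corollary \ref{cor5.ulrich.tensor.products}, which has already done all the heavy lifting.

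First, I would dispose of the trivial case $d = 0$: here $R$ is a zero-dimensional Cohen-Macaulay local domain, hence a field, and every finite $R$-module is free. So I may assume $d \geq 1$, which places us in the setting of Corollary \ref{cor5.ulrich.tensor.products}.

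Next, I observe that the Auslander-Reiten hypothesis supplies, in particular, the vanishing
\[
{\rm Ext}^i_R(M, M) = 0 \quad \text{for all } i \geq 1,
\]
which is exactly condition (i) of Corollary \ref{cor5.ulrich.tensor.products} applied to the Ulrich $R$-module $M$. Invoking the equivalence (i) $\Leftrightarrow$ (iii) established there, I conclude that $R$ is regular and $M$ is free, which in particular gives the desired conclusion. (Note that the hypothesis ${\rm Ext}^i_R(M, R) = 0$ is not even needed; the Ulrich assumption on $M$ together with self-Ext vanishing is already strong enough.)

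There is essentially no obstacle here, since Corollary \ref{cor5.ulrich.tensor.products} was proved earlier; the only thing to watch is the dimension reduction in the $d = 0$ case, which the remark preceding the statement already flags.
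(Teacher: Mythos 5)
Your argument is exactly the one the paper intends: dispose of $d = 0$ by noting a zero-dimensional domain is a field, then apply Corollary \ref{cor5.ulrich.tensor.products} (the implication (i) $\Rightarrow$ (iii)) to the Ulrich module $M$. This matches the paper's remark that the corollary is an immediate byproduct of Corollary \ref{cor5.ulrich.tensor.products} once one reduces to $d \geq 1$, and your observation that the vanishing of ${\rm Ext}^i_R(M,R)$ is not even needed is correct.
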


\bigskip

\noindent{\bf Acknowledgements.} The first author was partially supported by the CNPq-Brazil grants 301029/2019-9 and 406377/2021-9. The second author was partially supported by the FAPESQ grant 3099/2021.

\end{document}